\newcommand{\R}{\mathbb{R}}
\newcommand{\p}{\partial}
\numberwithin{equation}{section}
\newtheorem{theorem}{Theorem}
\newtheorem{lemma}{Lemma}
\newtheorem{proposition}{Proposition}
\theoremstyle{definition}
\newtheorem{definition}{Definition}
\newtheorem{remark}{Remark}
\title{
Strong instability of standing waves for 
a system of nonlinear Klein-Gordon equations
with quadratic interaction} 
\author{Masahito Ohta
\thanks{Department of Mathematics, Tokyo University of Science, 
1-3 Kagurazaka, Tokyo 162-8601, Japan. 
E-mail address: \texttt{mohta@rs.tus.ac.jp}}
}
\date{} 
\begin{document}

\maketitle

\begin{abstract} 
We consider a system of nonlinear Klein-Gordon equations 
with quadratic interaction in two and three space dimensions. 
The strong instability of standing wave solutions is studied 
for the system without assuming the mass resonance condition. 
\end{abstract}

\section{Introduction}

We consider the following system of nonlinear Klein-Gordon equations 
with quadratic interaction: 
\begin{equation} \label{nlkg}
\begin{cases}
\p_t^2 u_1-\Delta u_1+m_1^2 u_1=2 \overline u_1 u_2, \\
\p_t^2 u_2-\Delta u_2+m_2^2 u_2=u_1^2, 
\end{cases}
\end{equation}
where $u_1$ and $u_2$ are complex-valued functions of $(t,x) \in  \R \times \R^N$, 
$m_1$ and $m_2$ are positive constants, 
and $\overline z$ is the complex conjugate of $z$. 

We define the energy space 
$X=H^1(\R^N)^2 \times L^2 (\R^N)^2$ 
and the energy functional $E$ on $X$ by 
\begin{equation} \label{def-E} 
E(\vec u, \vec v)
=\frac{1}{2} \sum_{j=1}^{2} 
\left( \|v_j\|_{L^2}^2+\|\nabla u_j\|_{L^2}^2+m_j^2 \|u_j\|_{L^2}^2 \right) 
-G (\vec u)
\end{equation}
for  $(\vec u, \vec v) \in X$ with $\vec u=(u_1, u_2)$ and $\vec v=(v_1, v_2)$, 
where 
\begin{equation} \label{def-G} 
G (\vec u)
=\mathrm{Re} \int_{\R^N} u_1(x)^2 \, \overline {u_2(x)} \,dx
\end{equation}
is the nonlinear interaction term between $u_1$ and $u_2$. 
Note that $E$ is well defined on $X$ for $N\le 6$, 
and that $N=6$ is the critical dimension for $G$ at the level of $H^1$. 
In this paper, we use the notation 
\[
(u,v)_{L^2}
=\mathrm{Re} \int_{\R^N} u(x) \overline {v(x)} \,dx
\]
for complex-valued functions $u$, $v \in L^2(\R^N)$, 
and define the charge $Q$ by 
\begin{equation} \label{def-Q} 
Q(\vec u, \vec v)
=(v_1, i u_1)_{L^2}+2 (v_2, i u_2)_{L^2}
\end{equation}
for  $(\vec u, \vec v) \in X$, 
where $i=\sqrt{-1}$ is the imaginary unit. 

It is known that the Cauchy problem for \eqref{nlkg} 
is locally well-posed in the energy space $X$
for the case $N\le 5$ (see \cite{GV1, GV2, miyazaki}). 

\begin{proposition}
Let $N\le 5$. 
For every $(\vec u_0, \vec v_0) \in X$, 
there exists $T_{\max}=T_{\max} (\vec u_0, \vec v_0) \in (0, \infty]$ 
and a unique solution 
$(\vec u, \p_t \vec u) \in C([0, T_{\max}), X)$ of \eqref{nlkg} 
with initial condition $(\vec u(0), \p_t \vec u(0))=(\vec u_0, \vec v_0)$
such that either $T_{\max}=\infty$ $($global existence$)$ 
or $T_{\max}<\infty$ and 
$\displaystyle{\lim_{t\to T_{\max}} \| (\vec u(t), \p_t \vec u(t)) \|_{X}=\infty}$ 
$($finite time blowup$)$. 
Moreover, the solution $(\vec u(t), \p_t \vec u(t))$ satisfies 
the conservation laws of energy and charge: 
\[
E(\vec u(t), \p_t \vec u(t))=E(\vec u_0, \vec v_0), \quad  
Q(\vec u(t), \p_t \vec u(t))=Q(\vec u_0, \vec v_0)
\]
for all $t\in [0, T_{\max})$. 
\end{proposition}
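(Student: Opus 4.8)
The plan is to recast \eqref{nlkg} as a first-order integral equation, solve it by a contraction argument, and then extract the blow-up alternative and the conservation laws from the variational and gauge structure of the system. First I would introduce $\omega_j=\sqrt{-\Delta+m_j^2}$, a positive self-adjoint operator on $L^2(\R^N)$ for which $\|\omega_j\,\cdot\,\|_{L^2}$ is equivalent to the $H^1$ norm (using $m_j>0$). Writing $v_j=\p_t u_j$, a solution of \eqref{nlkg} with data $(\vec u_0,\vec v_0)$ is a fixed point of the Duhamel map
\[
u_j(t)=\cos(t\omega_j)u_{0,j}+\omega_j^{-1}\sin(t\omega_j)v_{0,j}+\int_0^t \omega_j^{-1}\sin((t-s)\omega_j)\,f_j(\vec u(s))\,ds,
\]
where $f_1(\vec u)=2\overline u_1 u_2$ and $f_2(\vec u)=u_1^2$, the companion formula for $v_j=\p_t u_j$ being obtained by differentiation in $t$. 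Since $\omega_j^{-1}\sin(t\omega_j)$ maps $L^2$ into $H^1$ and $\cos(t\omega_j)$ acts boundedly on $H^1$ and on $L^2$, the free evolution is a group on $X$ that is unitary in the energy norm and hence bounded uniformly for $t$ in bounded intervals; thus the whole scheme is driven by the mapping properties of $F(\vec u)=(f_1(\vec u),f_2(\vec u))$.

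The central step is to show that $F$ sends the solution space into a space for which the Duhamel integral recovers the lost derivative, with a locally Lipschitz bound. For $N\le 4$ this is elementary: by the Sobolev embedding $H^1(\R^N)\hookrightarrow L^4(\R^N)$ (valid precisely for $N\le 4$), the quadratic map $F$ is locally Lipschitz from $H^1(\R^N)^2$ into $L^2(\R^N)^2$, and a standard contraction in a ball of $C([0,T],X)$ with $T$ small closes the argument. The main obstacle is the case $N=5$, where $H^1\hookrightarrow L^4$ fails ($2^\ast=10/3<4$) and $F$ no longer maps $H^1$ into $L^2$; here I would follow \cite{GV1,GV2,miyazaki} and run the contraction in an auxiliary space $C([0,T],X)\cap L^q([0,T];W^{1,r})$ built from an admissible Strichartz pair for the Klein--Gordon group, estimating $F(\vec u)$ in the dual Strichartz norm. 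The quadratic nonlinear estimate in these mixed space--time norms, combined with the inhomogeneous Strichartz estimate for $\omega_j^{-1}\sin(t\omega_j)$, is the delicate technical point; once it is in place, the contraction and the local Lipschitz dependence on the data proceed as before, yielding the unique solution in $C([0,T_{\max}),X)$ together with its continuous dependence on $(\vec u_0,\vec v_0)$.

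Finally I would establish the two remaining conclusions. The blow-up alternative follows because the existence time $T$ produced by the contraction depends only on $\|(\vec u_0,\vec v_0)\|_X$; a standard continuation argument then forces $\|(\vec u(t),\p_t\vec u(t))\|_X\to\infty$ whenever $T_{\max}<\infty$. For the conservation laws I would first verify them for smooth, decaying solutions by differentiating in $t$: energy conservation uses that $F(\vec u)=\nabla G(\vec u)$ in the real-Hilbert-space sense, whence $\frac{d}{dt}E=\sum_{j}(v_j,\,\p_t^2 u_j-\Delta u_j+m_j^2 u_j-f_j(\vec u))_{L^2}=0$ by \eqref{nlkg}, while charge conservation reflects the $U(1)$ gauge invariance $(u_1,u_2)\mapsto(e^{i\theta}u_1,e^{2i\theta}u_2)$ of \eqref{nlkg}, the weights $1$ and $2$ in \eqref{def-Q} being exactly those that make the nonlinear contribution $(2\overline u_1 u_2,\,iu_1)_{L^2}+2(u_1^2,\,iu_2)_{L^2}$ vanish. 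I would then pass to general data in $X$ by approximating with smooth data and invoking the continuous dependence established above, so that both identities hold on all of $[0,T_{\max})$.
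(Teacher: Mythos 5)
The paper gives no proof of this proposition at all: it is stated as known and attributed to \cite{GV1, GV2, miyazaki}, so there is no internal argument to compare against, and your proposal is in effect a reconstruction of the standard proof contained in those references. Your outline is correct and follows the same route: Duhamel formulation with $\omega_j=\sqrt{-\Delta+m_j^2}$, contraction in $C([0,T],X)$ using $H^1(\R^N)\hookrightarrow L^4(\R^N)$ for $N\le 4$, Strichartz spaces for $N=5$ (where indeed $2N/(N-2)=10/3<4$, so the quadratic nonlinearity no longer maps $H^1$ into $L^2$), a continuation argument for the blow-up alternative (legitimate here because the nonlinearity is $H^1$-subcritical for $N\le 5$, so the local existence time depends only on the data norm), and the conservation laws from $F=G'$ and the gauge invariance $(u_1,u_2)\mapsto(e^{i\theta}u_1,e^{2i\theta}u_2)$; your observation about the weights $1$ and $2$ in $Q$ is exactly right, since the two nonlinear contributions equal $\mp 2\,\mathrm{Im}\int_{\R^N} u_1^2\,\overline{u_2}\,dx$ respectively and cancel. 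The one step you defer without detail --- the dual-Strichartz estimate of the quadratic term for $N=5$ --- is precisely the technical core of \cite{GV1, GV2, miyazaki}, so your argument ultimately rests on the same citations the paper itself invokes, which is a reasonable division of labor rather than a gap.
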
 

The purpose of this paper is to study the strong instability of 
standing wave solutions for \eqref{nlkg} of the form: 
\[ 
\vec u(t,x)
=\bigl( e^{i \omega t} \phi_{1}(x), \, 
e^{2 i \omega t} \phi_{2}(x) \bigr), 
\]
where $\omega\in \R$ is a constant satisfying 
$\omega^2<\min\{m_1^2, \, m_2^2/4\}$, 
and $(\phi_{1}, \phi_{2})\in H^1(\R^N)^2$ 
is a ground state of the stationary problem: 
\begin{equation} \label{SP}
\begin{cases}
-\Delta \phi_{1}+(m_1^2-\omega^2) \phi_{1}=2\overline \phi_{1} \phi_{2}, \\
-\Delta \phi_{2}+(m_2^2-4\omega^2) \phi_{2}=\phi_{1}^2. 
\end{cases}
\end{equation}

For the definition of ground states of \eqref{SP}, 
see \eqref{def-Gomega} below. 
The existence of ground states for \eqref{SP} is well known 
(see, e.g., \cite{BrLi, HOT}), 
and all ground states are radially symmetric up to a translation in $\R^N$ 
(see \cite{BJM} and references therein). 
In the following, we assume that 
$(\phi_{1, \omega}, \phi_{2,\omega})$ is a radially symmetric ground state of \eqref{SP}, 
and denote 
\[
\vec \phi_{\omega}
=\left( \phi_{1, \omega}, \, \phi_{2,\omega} \right), \quad 
\vec \psi_{\omega}
=\left( i \omega \phi_{1, \omega}, \, 2 i \omega \phi_{2,\omega} \right).
\]

\begin{definition}  
We say that the standing wave solution 
$( e^{i \omega t} \phi_{1,\omega},  e^{2 i \omega t} \phi_{2,\omega})$ 
of \eqref{nlkg} is \textit{strongly unstable} 
if for every $\varepsilon>0$ there exists $(\vec u_0, \vec v_0) \in X$  
such that 
$\|(\vec u_0, \vec v_0)-(\vec \phi_{\omega}, \vec \psi_{\omega}) \|_{X}<\varepsilon$ 
and $T_{\max} (\vec u_0, \vec v_0)<\infty$, i.e., 
the solution $\vec u(t)$ of \eqref{nlkg} with 
$(\vec u(0), \p_t \vec u(0))=(\vec u_0, \vec v_0)$ blows up in finite time.
\end{definition}

When $N \in \{4, 5\}$,  
Miyazaki \cite{miyazaki} proved that 
the standing wave solution 
$( e^{i \omega t} \phi_{1,\omega},  e^{2 i \omega t} \phi_{2,\omega})$ 
of \eqref{nlkg} is strongly unstable for all $\omega \in \R$ satisfying 
$\omega^2<\min\{m_1^2, \, m_2^2/4\}$. 
Note that $N=4$ is the critical dimension at the level of $L^2$ 
for the system \eqref{nlkg} with quadratic interaction. 

On the other hand, 
for the $L^2$-subcritical case $N \in \{2,3\}$, 
it is proved in \cite{miyazaki} that 
under the mass resonance condition $m_2=2m_1$, 
the standing wave solution 
$( e^{i \omega t} \phi_{1,\omega},  e^{2 i \omega t} \phi_{2,\omega})$ 
of \eqref{nlkg} is strongly unstable if 
\begin{equation} \label{SC0}
(5-N) \omega^2\le m_1^2.
\end{equation}

The condition \eqref{SC0} is optimal 
under the mass resonance condition $m_2=2m_1$. 
In fact, 
by the same argument as in Shatah \cite{shatah83} 
for the single nonlinear Klein-Gordon equation: 
\begin{equation} \label{nlkg0} 
\p_t^2 u-\Delta u+u=|u|^{p-1} u, \quad 
(t,x) \in \R \times \R^N, 
\end{equation}
one can prove that if $m_2=2m_1$ 
and $\omega$ satisfies 
$m_1^2/(5-N) <\omega^2<m_1^2$, 
then the set 
$\{ (\vec \phi, \vec \psi): \vec \phi=(\phi_1, \phi_2) \in \mathcal G_{\omega}, \, 
\vec \psi=(i \omega \phi_1, 2i \omega \phi_2)\}$ 
is stable under the flow of \eqref{nlkg}, 
where $\mathcal G_{\omega}$ is the set of ground states of \eqref{SP} 
defined by \eqref{def-Gomega}. 

The main result of this paper is the following. 

\begin{theorem} \label{thm1}
Let $N \in \{2,3\}$. 
Assume that $m_1$ and $m_2$ be positive constants, 
and $\omega\in \R$ satisfies 
\begin{equation} \label{SC1}
(5-N) \omega^2\le \min \bigl\{ m_1^2, \, m_2^2/4 \bigr\}. 
\end{equation}
Then, the standing wave solution 
$( e^{i \omega t} \phi_{1,\omega},  \, e^{2 i \omega t} \phi_{2,\omega})$ 
of \eqref{nlkg} is strongly unstable, 
where $(\phi_{1, \omega}, \, \phi_{2,\omega})$ 
is a radially symmetric ground state of \eqref{SP}. 
\end{theorem}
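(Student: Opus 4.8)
The plan is to adapt the potential--well (Payne--Sattinger) strategy to the standing wave: characterize $\vec\phi_\omega$ variationally, produce nearby data lying in an invariant ``blow-up'' set, and then force finite-time blow-up by a concavity/virial argument, with the hypothesis \eqref{SC1} entering only at the last, quantitative step. I would first set up the action $S_\omega(\vec u)=\tfrac12\|\nabla u_1\|_{L^2}^2+\tfrac12(m_1^2-\omega^2)\|u_1\|_{L^2}^2+\tfrac12\|\nabla u_2\|_{L^2}^2+\tfrac12(m_2^2-4\omega^2)\|u_2\|_{L^2}^2-G(\vec u)$, whose critical points solve \eqref{SP}, together with its Nehari functional $K_\omega(\vec u)=\langle S_\omega'(\vec u),\vec u\rangle$ and the scaling functional $P_\omega(\vec u)=\|\nabla u_1\|_{L^2}^2+\|\nabla u_2\|_{L^2}^2-\tfrac N2 G(\vec u)$. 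The ground state minimizes $S_\omega$ on $\{K_\omega=0\}$; set $d(\omega)=S_\omega(\vec\phi_\omega)$. From $K_\omega(\vec\phi_\omega)=P_\omega(\vec\phi_\omega)=0$ one records $\|\nabla\phi_{1,\omega}\|^2+\|\nabla\phi_{2,\omega}\|^2=\tfrac N2 G(\vec\phi_\omega)$ and $(m_1^2-\omega^2)\|\phi_{1,\omega}\|^2+(m_2^2-4\omega^2)\|\phi_{2,\omega}\|^2=\tfrac{6-N}2 G(\vec\phi_\omega)$. The key conserved quantity is $\mathcal S_\omega:=E-\omega Q$; completing the square in the momentum gives $\mathcal S_\omega(\vec u,\vec v)=\tfrac12\|v_1-i\omega u_1\|_{L^2}^2+\tfrac12\|v_2-2i\omega u_2\|_{L^2}^2+S_\omega(\vec u)\ge S_\omega(\vec u)$, with equality exactly along the standing-wave velocity relation, so that $\mathcal S_\omega(\vec\phi_\omega,\vec\psi_\omega)=d(\omega)$.

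The destabilizing data would be the amplitude scaling $(\vec u_0,\vec v_0)=(\mu\vec\phi_\omega,\mu\vec\psi_\omega)$ with $\mu>1$ near $1$. Then $K_\omega(\mu\vec\phi_\omega)=3G(\vec\phi_\omega)\,\mu^2(1-\mu)<0$, while $\mu\mapsto S_\omega(\mu\vec\phi_\omega)$ has a strict local maximum at $\mu=1$ (second derivative $-3G(\vec\phi_\omega)$); since the velocity is scaled by the same factor, the square terms in $\mathcal S_\omega$ vanish and $\mathcal S_\omega(\vec u_0,\vec v_0)=S_\omega(\mu\vec\phi_\omega)<d(\omega)$. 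This data is $O(\mu-1)$ close to $(\vec\phi_\omega,\vec\psi_\omega)$ in $X$, so it suffices to show it blows up. For this I would prove that $\mathcal A=\{(\vec u,\vec v):\mathcal S_\omega<d(\omega),\ K_\omega(\vec u)<0\}$ is invariant under the flow: $\mathcal S_\omega$ is conserved, and $K_\omega(\vec u(t))$ can never vanish, since $K_\omega=0$ with $\vec u\neq0$ forces $S_\omega(\vec u)\ge d(\omega)>\mathcal S_\omega\ge S_\omega(\vec u)$, while the subcritical Gagliardo--Nirenberg bound $|G(\vec u)|\le C(\|\nabla u_1\|^2+\|\nabla u_2\|^2+\|u_1\|^2+\|u_2\|^2)^{3/2}$ keeps $\vec u(t)$ bounded away from $0$ on $\{K_\omega<0\}$.

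For the blow-up itself I would use $M(t)=\tfrac12(\|u_1(t)\|_{L^2}^2+\|u_2(t)\|_{L^2}^2)$; a double differentiation and the system give $M''=\|\p_t u_1\|^2+\|\p_t u_2\|^2-\omega^2\|u_1\|^2-4\omega^2\|u_2\|^2-K_\omega(\vec u)$, where the $\omega$-quadratic terms combine with the conserved charge $Q$ so that all cross terms cancel identically. Together with a Payne--Sattinger lower bound $-K_\omega\ge c\,(d(\omega)-\mathcal S_\omega)>0$ on $\mathcal A$ and the Cauchy--Schwarz estimate for $M'$, the goal is a Levine-type concavity inequality $MM''-(1+\alpha)(M')^2\ge0$ for some $\alpha>0$, whence $\|u(t)\|_{L^2}\to\infty$ in finite time and the blow-up alternative of the Proposition is triggered.

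The hard part will be closing this last inequality with sharp constants, and this is exactly where the dimension and \eqref{SC1} enter. Energy and charge conservation alone produce no $N$-dependence and are too lossy; the dimensional gain must be extracted from the scaling functional $P_\omega$ and the Pohozaev identity above, which is what produces the factor $5-N$. Treating the two components separately --- $u_1$ carrying frequency $\omega$ and mass $m_1$, and $u_2$ carrying frequency $2\omega$ and mass $m_2$ --- the positivity of the governing coefficients should reduce to $m_1^2-(5-N)\omega^2\ge0$ and $m_2^2-(5-N)(2\omega)^2\ge0$, i.e.\ precisely to \eqref{SC1}. This is consistent with, and exhibits the optimality of, the Grillakis--Shatah--Strauss threshold: \eqref{SC1} is exactly the range in which the scaled standing-wave data can be pushed into the blow-up régime while remaining arbitrarily close to $(\vec\phi_\omega,\vec\psi_\omega)$, whereas the reverse inequality corresponds to orbital stability.
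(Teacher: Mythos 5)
Your variational setup is essentially sound (your $\mathcal S_\omega$ is the paper's $S_\omega$, your invariant set $\{\mathcal S_\omega<d,\ K_\omega<0\}$ is invariant by the same argument as Lemma \ref{InvSet1}, and the amplitude-scaled data lie in it), but the proof collapses at the only step that carries the theorem: forcing blow-up. Your formula for the second derivative of $V(t)=\tfrac12\sum_j\|u_j(t)\|_{L^2}^2$ is correct,
\begin{equation*}
V''=\sum_{j=1}^{2}\|\p_t u_j\|_{L^2}^2-\omega^2\|u_1\|_{L^2}^2-4\omega^2\|u_2\|_{L^2}^2-K_\omega(\vec u(t)),
\end{equation*}
but the claim that ``the $\omega$-quadratic terms combine with the conserved charge $Q$ so that all cross terms cancel identically'' is false and cannot be repaired: conservation of $Q=(\p_t u_1,iu_1)_{L^2}+2(\p_t u_2,iu_2)_{L^2}$ controls a single scalar, and Cauchy--Schwarz bounds it from \emph{above} by products of norms; it yields no lower bound on $\sum_j\|\p_t u_j\|^2-\omega^2\|u_1\|^2-4\omega^2\|u_2\|^2$, which has no sign along the flow. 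What actually eliminates the explicit $\omega$-terms is \emph{energy} conservation, which turns the identity into Levine's form $V''=5K(\p_t\vec u)+\tfrac12\sum_j\|\nabla u_j\|_{L^2}^2+M(\vec u)-3E(\vec u_0,\vec v_0)$, and this scheme provably cannot close near the standing wave: Cauchy--Schwarz gives $(V')^2\le 4V\,K(\p_t\vec u)$, so the concavity inequality $VV''\ge(1+\beta)(V')^2$ would follow from $\tfrac12\sum_j\|\nabla u_j\|^2+M(\vec u)-3E_0\ge(4\beta-1)K(\p_t\vec u)$; but the ground-state identities ($K_\omega(\vec\phi_\omega)=0$, $P_\omega(\vec\phi_\omega)=0$, i.e.\ \eqref{El2}--\eqref{El3}) show that at $(\vec\phi_\omega,\vec\psi_\omega)$ the left-hand side equals $-5K(\vec\psi_\omega)$, so one would need $\beta\le-1$ whenever $\omega\ne0$. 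The deficit is proportional to $K(\vec\psi_\omega)=\tfrac{\omega^2}{2}\|\phi_{1,\omega}\|_{L^2}^2+2\omega^2\|\phi_{2,\omega}\|_{L^2}^2$, uniformly in $N$; in particular the hypothesis \eqref{SC1} never enters your argument and cannot rescue it. This is exactly the known obstruction that restricts $L^2$-concavity arguments (Levine, Payne--Sattinger, Berestycki--Cazenave) to $\omega=0$ or negative energy, and it is why \cite{OT}, \cite{miyazaki} and the present paper all abandon $V(t)$ in favor of a dilation-type quantity.

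Concretely, every ingredient the paper needs is absent from your proposal. (a) The monotonicity quantity is not $V$ but the localized virial functional $I_\rho$ whose derivative is controlled by $H(\vec u,\vec v)=\p_\lambda E(\vec u^\lambda,\vec v_\lambda)|_{\lambda=1}$, built from the \emph{charge-preserving} scaling $u^\lambda(x)=\lambda^2u(\lambda x)$, $v_\lambda(x)=\lambda^{N-2}v(\lambda x)$; your $P_\omega$ is instead the $L^2$-scaling (Pohozaev) functional, which differs from the paper's \eqref{Pomega} and is never actually used in your argument. (b) The virial estimate (Proposition \ref{virial2}) holds only for radially symmetric solutions, via Strauss's decay estimate, and requires Cazenave-type uniform bounds on global solutions; your proposal never restricts to radial data and never confronts the localization errors that force this restriction. (c) The hypothesis \eqref{SC1}/\eqref{SC2} does its work in Lemma \ref{blowup2}(iii), $\alpha^2K(\vec\psi_\omega)\le(\alpha+2)L(\vec\phi_\omega)$, which feeds the key Proposition \ref{key}: $\alpha E(\vec\phi_\omega,\vec\psi_\omega)\le\alpha E(\vec u,\vec v)-H(\vec u,\vec v)$ whenever $H\le0$, $Q=Q(\vec\phi_\omega,\vec\psi_\omega)$ and $P_\omega(\vec u)<0$. (d) Proposition \ref{key} is precisely what removes the mass-resonance hypothesis $m_2=2m_1$: it supplements the $M_\omega$-comparison (the only estimate your Nehari-based setup produces, and which Remark \ref{remark5} explains is insufficient when $m_2\ne 2m_1$) with the $L$-comparison of Lemma \ref{VC3} and the elementary convexity Lemma \ref{function-g}. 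Finally, your closing assertion that \eqref{SC1} is ``exactly'' the stability threshold overstates what is known: the paper explicitly leaves the optimality of \eqref{SC1} open when $m_2\ne 2m_1$.
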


\begin{remark}
In Theorem \ref{thm1}, 
the assumption \eqref{SC1} can be replaced by the condition 
\begin{equation}\label{SC2} 
\bigl\{ m_1^2-(5-N) \omega^2 \bigr\} \|\phi_{1,\omega}\|_{L^2}^2
+\bigl\{ m_2^2-4(5-N) \omega^2 \bigr\} \|\phi_{2,\omega}\|_{L^2}^2
\ge 0
\end{equation}
(see Sections \ref{sect4} and \ref{sect5}). 
However, it seems difficult to confirm \eqref{SC2} 
other than the case \eqref{SC1}. 
\end{remark}

\begin{remark}
The condition \eqref{SC2} is equivalent to 
\begin{equation} \label{SC11}
\p_{\lambda}^2 
E \bigl( (\vec \phi_{\omega})^{\lambda}, \, (\vec \psi_{\omega})_{\lambda} \bigr) 
|_{\lambda=1}\le 0 
\end{equation}
(see Lemma \ref{blowup2} in Section \ref{sect4}), 
where $u^{\lambda}$ and $v_{\lambda}$ are scaling defined by 
\begin{equation} \label{scale1}
u^{\lambda}(x)=\lambda^2 u(\lambda x), \quad 
v_{\lambda}(x)=\lambda^{N-2} v(\lambda x)
\end{equation} 
for $\lambda>0$ and $x\in \R^N$. 
The scaling \eqref{scale1} leaves the charge $Q$ invariant, 
and plays a central role in this paper. 
\end{remark}

\begin{remark}
Although the condition \eqref{SC1} coincides with \eqref{SC0} 
under the mass resonance condition $m_2=2m_1$, 
it is not clear whether \eqref{SC1} or \eqref{SC2} is optimal 
in the case $m_2\ne 2m_1$. 
It will be an interesting and difficult problem to investigate the stability of 
$( e^{i \omega t} \phi_{1,\omega},  \, e^{2 i \omega t} \phi_{2,\omega})$ 
for \eqref{nlkg} 
in the case where $m_2\ne 2m_1$ and 
$\max \bigl\{ m_1^2, \, m_2^2/4\bigr\}<(5-N)\omega^2$. 
\end{remark}

The proof in Miyazaki \cite{miyazaki} for the system \eqref{nlkg} 
is based on the argument in Ohta and Todorova \cite{OT} 
for the single nonlinear Klein-Gordon equation \eqref{nlkg0}, 
and it seems difficult to apply the argument in \cite{miyazaki, OT} 
to the case $m_2\ne 2m_1$ 
(see Remark \ref{remark5} in Section \ref{sect:CGS}). 
The novel idea for proving Theorem \ref{thm1} is found 
in the key Proposition \ref{key}, 
where we use a similar idea introduced by the author \cite{ohta18} 
for the nonlinear Schr\"odinger equation with harmonic potential: 
\begin{equation} \label{nls-harmonic}
i\p_t u=-\Delta u+|x|^2 u-|u|^{p-1}u, \quad 
(t,x) \in \R \times \R^N. 
\end{equation}
It is interesting that similar structures appear in 
the nonlinear Schr\"odinger equation \eqref{nls-harmonic} 
with harmonic potential 
and the nonlinear Klein-Gordon equations \eqref{nlkg} and \eqref{nlkg0} 
(see Section \ref{sect5}). 

The rest of the paper is organized as follows. 
In Section \ref{sect:virial}, we recall the virial identity for \eqref{nlkg} 
and its localized version. 
It is important to note that 
the virial identity \eqref{virial1} is closely related to the scaling \eqref{scale1} 
which leaves the charge $Q$ invariant. 
This natural point of view has been overlooked in previous papers \cite{miyazaki, OT}. 
In Section \ref{sect:CGS}, 
we give some variational characterizations of ground states for \eqref{SP}. 
Then, the main Theorem \ref{thm1} is proved in Section \ref{sect4} 
using the key Proposition \ref{key}. 
Finally, we give the proof of Proposition \ref{key} in Section \ref{sect5}. 

\section{Virial identity} \label{sect:virial}

Throughout this section, we assume that $N\in \{2, 3\}$. 
The virial identity is closely related to the scaling \eqref{scale1}: 
$u^{\lambda}(x)=\lambda^2 u(\lambda x)$, 
$v_{\lambda}(x)=\lambda^{N-2} v(\lambda x)$. 

For $\vec u=(u_1, u_2)$, $\vec v=(v_1, v_2)$ and $\lambda>0$, 
we define 
\[
\vec u^{\lambda}=\left( (u_1)^{\lambda}, \, (u_2)^{\lambda} \right), \quad 
\vec v_{\lambda}=\left( (v_1)_{\lambda}, \, (v_2)_{\lambda} \right).
\]
Then, we have 
\begin{align}
&Q(\vec u^{\lambda}, \vec v_{\lambda})=Q(\vec u, \vec v), 
\label{Qlambda} \\
&E(\vec u^{\lambda}, \vec v_{\lambda})
=\lambda^{-\alpha} K (\vec v)
+\lambda^{\alpha} M (\vec u) 
-\lambda^{\alpha+2} L(\vec u), 
\label{Elambda} 
\end{align}
where $\alpha=4-N$, and 
\begin{align}
&K (\vec v)
=\frac{1}{2} \sum_{j=1}^{2} \|v_j\|_{L^2}^2, \quad 
M (\vec u)
=\frac{1}{2} \sum_{j=1}^{2} m_j^2 \|u_j\|_{L^2}^2, 
\nonumber \\
&L (\vec u)
=-\frac{1}{2} \sum_{j=1}^{2} \|\nabla u_j\|_{L^2}^2
+G(\vec u), 
\label{def-L}  
\end{align}
and $G$ is defined by \eqref{def-G}. 
Moreover, we define
\begin{equation} \label{def-H}
H (\vec u, \vec v)
=-\alpha K (\vec v) +\alpha M (\vec u) -(\alpha+2) L (\vec u). 
\end{equation} 
Note that 
$H (\vec u, \vec v)
=\p_{\lambda} E(\vec u^{\lambda}, \vec v_{\lambda}) |_{\lambda=1}$. 
Then, a solution $\vec u(t)$ of \eqref{nlkg} formally satisfies 
the virial identity or the broken dilation identity: 
\begin{align} 
&-\frac{d}{dt} \sum_{j=1}^{2} \mathrm{Re} \int_{\R^N} 
\bigl\{ x \cdot \nabla u_j(t,x)+2 u_j(t,x) \bigr\} \p_t \overline{u_j(t,x)} \, dx 
\nonumber \\
&=H(\vec u(t), \p_t \vec u(t))
\label{virial1} 
\end{align}
(see \cite{miyazaki} and also \cite{OT, SS}), 
where note that 
$\p_{\lambda} u^{\lambda}(x) |_{\lambda=1}=x \cdot \nabla u(x)+2 u(x)$. 

Since the integral on the left-hand side of \eqref{virial1} is not well defined 
on the energy space $X$, 
we need to approximate the weight function $x$ in \eqref{virial1} 
by suitable bounded functions. 

\begin{definition}
Let $\Phi \in C^1 [0, \infty)$ be a function satisfying 
\[
\Phi (r)=
\begin{cases}
N & \mbox{for} \hspace{3mm} 0\le r \le 1, \\
0 & \mbox{for} \hspace{3mm} r\ge 2, 
\end{cases}
\quad 
\Phi'(r)\le 0 
\hspace{3mm} \mbox{for} \hspace{3mm} 
1\le r \le 2. 
\]

For $\rho>0$, we define 
\[
\Phi_{\rho} (r)=\Phi \left( \frac{r}{\rho} \right), \quad 
\Psi_{\rho} (r)=\frac{1}{r^{N-1}} \int_{0}^{r} \Phi_{\rho} (s) s^{N-1} \,ds, 
\]
and 
\begin{align*}
I_{\rho} (\vec u, \vec v)
&=\sum_{j=1}^{2} \mathrm{Re} \int_{\R^N} 
\frac{\Psi_{\rho} (|x|)}{|x|} 
x \cdot \nabla u_j (t,x) \p_t \overline{u_j (t,x)} \,dx \\
&\hspace{5mm}
+\sum_{j=1}^{2} \mathrm{Re} \int_{\R^N} 
\frac{1}{2} \bigl\{ \Phi_{\rho} (|x|)+4-N \bigr\} 
u_j(t,x) \p_t \overline{u_j(t,x)}\,dx 
\end{align*}
for $(\vec u, \vec v) \in X$. 
\end{definition}

\begin{remark}
The functions $\Phi_{\rho}$ and $\Psi_{\rho}$ satisfy the following properties: 
\begin{align*}
&\Phi_{\rho}(r)=N, \quad \Psi_{\rho}(r)=r
\hspace{3mm} \mbox{for} \hspace{3mm} 
0\le r\le \rho, \\
&0\le \Phi_{\rho} (r)\le N, \quad 
0\le \Psi_{\rho}(r)\le 2 \rho 
\hspace{3mm} \mbox{for} \hspace{3mm} 
r\ge 0 
\end{align*}
(see \cite[Lemma 7]{OT}). 
\end{remark}

The following proposition is proved by Miyazaki \cite{miyazaki}. 

\begin{proposition} \label{virial2} 
Let $(\vec u, \p_t \vec u) \in C([0, T_{\max}), X)$ 
be a radially symmetric solution of \eqref{nlkg}. 
Then, there exist positive constants $C_1$ and $C_2$, 
which depend only on $N$, such that 
\begin{align}
&-\frac{d}{dt} I_{\rho} (\vec u(t), \p_t \vec u(t)) 
\label{virial3} \\
&\le H (\vec u(t), \p_t \vec u(t)) 
+C_1 \rho^{-1} \|\vec u(t)\|_{H^1}^2
+C_2 \rho^{-(N-1)/2} \|\vec u(t)\|_{H^1}^3
\nonumber 
\end{align}
for all $t\in [0, T_{\max})$ and $\rho>0$. 
\end{proposition}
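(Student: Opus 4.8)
The plan is to differentiate $I_\rho(\vec u(t),\p_t\vec u(t))$ in $t$, substitute the equations in the form $\p_t^2 u_j=\Delta u_j-m_j^2u_j+f_j$ (with $f_1=2\overline{u_1}\,u_2$, $f_2=u_1^2$, and their conjugates, since the $\p_t\overline{u_j}$ factor produces $\p_t^2\overline{u_j}$), and integrate by parts, sorting the result into a kinetic part (products of $\p_t u_j$), a gradient part, a mass part, and a nonlinear part. On $X$ this differentiation is only formal because $I_\rho$ contains $\nabla u_j\,\p_t\overline{u_j}$; I would first establish the identity for smooth finite-energy solutions coming from smoother data via the higher-regularity local theory, then pass to general energy solutions by a density and continuity argument, exactly as one justifies the integrated form of \eqref{virial1}. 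The single structural fact behind every cancellation is that the truncated radial field $\vec h(x)=\Psi_\rho(|x|)\,x/|x|$ satisfies $\mathrm{div}\,\vec h=\Phi_\rho$, which is precisely how $\Psi_\rho$ is manufactured from $\Phi_\rho$.

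Carrying out the integrations by parts using radial symmetry (so $x\cdot\nabla u_j=|x|\,\p_r u_j$ and the angular terms vanish), I expect the following. Through $\mathrm{div}\,\vec h=\Phi_\rho$ and the coefficient $\tfrac12\{\Phi_\rho+4-N\}$, the $\Phi_\rho$'s cancel and the kinetic terms collapse \emph{exactly} to $\tfrac{4-N}{2}\sum_j\|\p_t u_j\|_{L^2}^2$, and likewise the mass terms collapse exactly to $-\tfrac{4-N}{2}\sum_j m_j^2\|u_j\|_{L^2}^2$. The gradient terms yield the coefficient $-\Psi_\rho'(|x|)-\tfrac{4-N}{2}$ against $|\nabla u_j|^2$, plus a single genuinely lower-order piece $R_1=-\mathrm{Re}\int(\nabla g)u_j\cdot\nabla\overline{u_j}$ with $g=\tfrac12\{\Phi_\rho+4-N\}$. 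The nonlinear terms, again via $\mathrm{div}\,\vec h=\Phi_\rho$, assemble into $\int\{\tfrac12\Phi_\rho+\tfrac{3(4-N)}{2}\}\,\mathrm{Re}(u_1^2\overline{u_2})\,dx$. Since $\Phi_\rho=N$ and $\Psi_\rho(r)=r$ (hence $\Psi_\rho'=1$) on $\{|x|\le\rho\}$, all four pieces there reduce to the coefficients of $-H$, and comparing with the explicit form of $H$ I obtain
\[
-\frac{d}{dt}I_\rho(\vec u,\p_t\vec u)=H(\vec u,\p_t\vec u)-\sum_{j=1}^2\int_{\R^N}(1-\Psi_\rho')|\nabla u_j|^2\,dx-R_1-R_2,
\]
where $R_2=\tfrac12\int_{\R^N}(\Phi_\rho-N)\,\mathrm{Re}(u_1^2\overline{u_2})\,dx$.

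The crucial observation, on which the whole estimate turns, is that the gradient remainder has a favourable sign: I claim $\Psi_\rho'(r)\le1$ for all $r>0$, so that $-\sum_j\int(1-\Psi_\rho')|\nabla u_j|^2\le0$ and this term is simply discarded. To see the claim, write $\Psi_\rho'=\Phi_\rho-\tfrac{N-1}{r}\Psi_\rho$; since $\Phi_\rho$ is nonincreasing, $\int_0^r\Phi_\rho(s)s^{N-1}\,ds\ge\Phi_\rho(r)\,r^N/N$, whence $\tfrac{N-1}{r}\Psi_\rho(r)\ge\tfrac{N-1}{N}\Phi_\rho(r)$ and therefore $\Psi_\rho'\le\tfrac1N\Phi_\rho\le1$, the last step by $\Phi_\rho\le N$. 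This is exactly what prevents the $O(1)$ gradient contribution on $\{|x|\ge\rho\}$ from destroying the claimed bound.

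It then remains to estimate $R_1$ and $R_2$. For $R_1$, since $\nabla g=\tfrac12\Phi_\rho'(|x|)\,x/|x|$ is supported in $\{\rho\le|x|\le2\rho\}$ with $\|\Phi_\rho'\|_{L^\infty}\le C\rho^{-1}$, the Cauchy--Schwarz inequality gives $|R_1|\le C\rho^{-1}\sum_j\|u_j\|_{L^2}\|\nabla u_j\|_{L^2}\le C_1\rho^{-1}\|\vec u\|_{H^1}^2$. For $R_2$, using $|\Phi_\rho-N|\le N$ supported in $\{|x|\ge\rho\}$ together with the radial decay estimate $|u_j(x)|\le C|x|^{-(N-1)/2}\|u_j\|_{H^1}$ (valid for $N\in\{2,3\}$), I bound the factor $u_2$ by $C\rho^{-(N-1)/2}\|u_2\|_{H^1}$ on $\{|x|\ge\rho\}$, obtaining $|R_2|\le C\rho^{-(N-1)/2}\|u_2\|_{H^1}\|u_1\|_{L^2}^2\le C_2\rho^{-(N-1)/2}\|\vec u\|_{H^1}^3$. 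Dropping the nonpositive gradient term and combining these bounds yields \eqref{virial3}. The two points I expect to demand the most care are the sign lemma $\Psi_\rho'\le1$, which is the analytic heart of the localization, and the rigorous justification of the time differentiation of $I_\rho$ on the energy space $X$; the error estimates themselves are then routine.
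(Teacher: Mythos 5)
Your proposal is correct and takes essentially the same route as the paper's proof: the paper does not reprove this proposition but cites Lemma 2.2 of \cite{miyazaki} and Lemma 8 of \cite{OT}, and those proofs consist of exactly your computation --- the localized virial identity built on $\mathrm{div}\bigl(\Psi_\rho(|x|)\,x/|x|\bigr)=\Phi_\rho$, the sign property $\Psi_\rho'\le 1$ deduced from the monotonicity of $\Phi$, and the Strauss radial decay estimate to control the nonlinear tail term. Your algebraic identity for $-\frac{d}{dt}I_\rho$, the sign lemma, and both error bounds all check out.
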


For the proof of Proposition \ref{virial2}, 
see Lemma 2.2 of \cite{miyazaki} and also Lemma 8 of \cite{OT}. 
Note that the uniform decay estimate of Strauss \cite{strauss} 
for radially symmetric functions in $H^1(\R^N)$: 
\[
\sup_{x\in \R^N} |x|^{(N-1)/2} |u(x)|\le C \|u\|_{H^1}
\]
is used in the proof of Proposition \ref{virial2}. 
When $N=1$, the function $\rho^{-(N-1)/2}$ 
on the right-hand side of \eqref{virial3} 
does not decay to $0$ as $\rho \to \infty$. 
Therefore, we cannot treat the case $N=1$ in this paper 
as well as in \cite{miyazaki, OT}. 

\section{Characterizations of ground states} \label{sect:CGS} 

In this section, we study variational characterizations 
of ground states of \eqref{SP}. 
Throughout this section, 
we assume that $N\le 3$ and $\omega\in \R$ satisfies 
$\omega^2<\min\{m_1^2, \, m_2^2/4\}$. 

For $\vec u=(u_1, u_2) \in H^1(\R^N)^2$, we define 
\begin{align} 
&J_{\omega} (\vec u) 
=M_{\omega} (\vec u)-L(\vec u), 
\nonumber \\
&M_{\omega} (\vec u) 
=\frac{m_1^2-\omega^2}{2} \|u_1\|_{L^2}^2
+\frac{m_2^2-4\omega^2}{2} \|u_2\|_{L^2}^2, 
\label{Momega} 
\end{align}
where $L$ is defined by \eqref{def-L}. 
Then, 
$\vec \phi \in H^1(\R^N)^2$ is a solution of \eqref{SP} 
if and only if $J_{\omega}' ( \vec \phi )=0$. 
Let 
\[
\mathcal N_{\omega}
=\{ \vec u\in H^1(\R^N)^2 \setminus \{\vec 0\}: 
J_{\omega}' ( \vec u)=0 \}
\]
be the set of non-trivial solutions of \eqref{SP}, 
and we define the set of ground states of \eqref{SP} by 
\begin{equation} \label{def-Gomega}
\mathcal G_{\omega}
=\{ \vec \phi \in \mathcal N_{\omega} : 
J_{\omega} (\vec \phi) \le J_{\omega} (\vec u) 
\hspace{2mm} \mbox{for all} \hspace{2mm} 
\vec u\in \mathcal N_{\omega} \}. 
\end{equation} 

We define the Nehari functional $K_{\omega}$ by 
\[
K_{\omega}(\vec u)
=\p_{\lambda} J_{\omega} (\lambda \vec u) |_{\lambda=1} 
=2 M_{\omega}(\vec u)
+\sum_{j=1}^{2} \|\nabla u_j\|_{L^2}^2-3 G(\vec u). 
\]
By the standard variational method, we see that 
the set $\mathcal G_{\omega}$ is not empty, 
and $\vec \phi_{\omega} \in \mathcal G_{\omega}$ satisfies 
\begin{equation} \label{VC1} 
J_{\omega} (\vec \phi_{\omega})
=\inf \{J_{\omega} (\vec v) : 
\vec v \in H^1(\R^N)^2 \setminus \{ \vec 0 \}, \hspace{2mm} 
K_{\omega} (\vec v)=0 \}. 
\end{equation} 

Next, we define another functional $P_{\omega}$ by 
\begin{equation} \label{Pomega}
P_{\omega}(\vec u)
=\p_{\lambda} J_{\omega} \bigl( \vec u^{\lambda} \bigr) |_{\lambda=1} 
=\alpha M_{\omega} (\vec u) -(\alpha+2) L(\vec u). 
\end{equation}
Recall that $\alpha=4-N$ and 
$J_{\omega} \bigl( \vec u^{\lambda} \bigr)
=\lambda^{\alpha} M_{\omega} (\vec u)-\lambda^{\alpha+2} L(\vec u)$, 
and note that 
$P_{\omega} (\vec \phi_{\omega})
=\p_{\lambda} J_{\omega} 
\bigl( (\vec \phi_{\omega})^{\lambda} \bigr) |_{\lambda=1} 
=0$. 

The following characterization of ground states 
in terms of $P_{\omega}$ plays an important role 
in the proof of Theorem \ref{thm1}. 

\begin{lemma} \label{VC2} 
Let $\vec \phi_{\omega} \in \mathcal G_{\omega}$. 
If $\vec u \in H^1(\R^N)^2 \setminus \{ \vec 0 \}$ satisfies 
$P_{\omega} (\vec u)=0$, 
then 
$J_{\omega} (\vec \phi_{\omega}) \le J_{\omega} (\vec u)$. 
\end{lemma}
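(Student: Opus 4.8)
The plan is to run the whole argument along the single scaling $\vec u\mapsto\vec u^{\lambda}$ from \eqref{scale1}, exploiting the explicit form $J_{\omega}(\vec u^{\lambda})=\lambda^{\alpha}M_{\omega}(\vec u)-\lambda^{\alpha+2}L(\vec u)$ recorded before \eqref{Pomega}, together with the Nehari characterization \eqref{VC1}. The constraint $P_{\omega}(\vec u)=0$ says exactly that $\lambda=1$ is a critical point of $\lambda\mapsto J_{\omega}(\vec u^{\lambda})$; I will upgrade this to a \emph{global maximum}, so that $J_{\omega}(\vec u)\ge J_{\omega}(\vec u^{\lambda})$ for every $\lambda>0$. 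Separately, I will show that the same ray $\{\vec u^{\lambda}\}$ meets the Nehari manifold $\{K_{\omega}=0\}$ at a unique point $\vec u^{\lambda_0}$, where \eqref{VC1} supplies the lower bound $J_{\omega}(\vec\phi_{\omega})\le J_{\omega}(\vec u^{\lambda_0})$. Chaining these two facts gives the claim.

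First I would record the positivity facts that make the scaling well behaved. For $\vec u\neq\vec 0$ one has $M_{\omega}(\vec u)>0$, since the coefficients $m_1^2-\omega^2$ and $m_2^2-4\omega^2$ in \eqref{Momega} are strictly positive under the standing assumption $\omega^2<\min\{m_1^2,\,m_2^2/4\}$. Using $P_{\omega}(\vec u)=\alpha M_{\omega}(\vec u)-(\alpha+2)L(\vec u)=0$ together with $\alpha=4-N>0$ (here $N\le 3$) yields $L(\vec u)=\tfrac{\alpha}{\alpha+2}M_{\omega}(\vec u)>0$. Consequently $g(\lambda):=J_{\omega}(\vec u^{\lambda})=\lambda^{\alpha}M_{\omega}(\vec u)-\lambda^{\alpha+2}L(\vec u)$ satisfies $g(\lambda)\to 0$ as $\lambda\to 0^{+}$, $g(\lambda)\to-\infty$ as $\lambda\to\infty$, and has a unique positive critical point; since $g'(1)=P_{\omega}(\vec u)=0$, that critical point is $\lambda=1$, which is therefore the global maximizer. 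In particular $J_{\omega}(\vec u)\ge J_{\omega}(\vec u^{\lambda})$ for all $\lambda>0$.

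Next I would locate the Nehari point on the ray. A direct computation with the exponents in \eqref{scale1} (recall $M_{\omega}$ scales like $\lambda^{\alpha}$, while the gradient and $G$ terms scale like $\lambda^{\alpha+2}$) gives
\[
K_{\omega}(\vec u^{\lambda})
=2\lambda^{\alpha}M_{\omega}(\vec u)
+\lambda^{\alpha+2}\Bigl(\sum_{j=1}^{2}\|\nabla u_j\|_{L^2}^2-3G(\vec u)\Bigr).
\]
Eliminating $G(\vec u)$ through the identity $L(\vec u)=\tfrac{\alpha}{\alpha+2}M_{\omega}(\vec u)$ turns the second coefficient into a strictly negative quantity, so that
\[
K_{\omega}(\vec u^{\lambda})
=\lambda^{\alpha}\bigl(2M_{\omega}(\vec u)-\lambda^{2}c\bigr),
\qquad
c=\tfrac12\sum_{j=1}^{2}\|\nabla u_j\|_{L^2}^2+\tfrac{3\alpha}{\alpha+2}M_{\omega}(\vec u)>0.
\]
Hence $K_{\omega}(\vec u^{\lambda})$ has the unique positive root $\lambda_0=\bigl(2M_{\omega}(\vec u)/c\bigr)^{1/2}$, and $\vec u^{\lambda_0}\in H^1(\R^N)^2\setminus\{\vec 0\}$ lies on the Nehari manifold. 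Applying \eqref{VC1} to $\vec u^{\lambda_0}$ and then the maximality from the previous paragraph gives $J_{\omega}(\vec\phi_{\omega})\le J_{\omega}(\vec u^{\lambda_0})\le J_{\omega}(\vec u)$, which is the asserted inequality.

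I expect no serious obstacle: the argument is purely a one-variable analysis of the two scaling profiles $\lambda\mapsto J_{\omega}(\vec u^{\lambda})$ and $\lambda\mapsto K_{\omega}(\vec u^{\lambda})$. The only points requiring care are the sign conditions that make everything run, namely $\alpha>0$ (valid since $N\le 3$) and $M_{\omega}(\vec u)>0$ (valid since $\omega^2<\min\{m_1^2,\,m_2^2/4\}$); these guarantee simultaneously that $g$ has a genuine interior maximum at $\lambda=1$ rather than a degenerate or boundary critical point, and that the ray $\{\vec u^{\lambda}\}$ actually crosses the Nehari manifold in $(0,\infty)$. The conceptual content is simply that the charge-invariant scaling \eqref{scale1} links the Pohozaev-type functional $P_{\omega}$ to the Nehari functional $K_{\omega}$ on a common one-parameter family.
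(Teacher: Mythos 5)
Your proof is correct and follows essentially the same route as the paper's own argument (based on the Jeanjean--Le Coz idea): show that $\lambda=1$ is the global maximum of $\lambda\mapsto J_{\omega}(\vec u^{\lambda})$ using $P_{\omega}(\vec u)=0$ and $L(\vec u)=\tfrac{\alpha}{\alpha+2}M_{\omega}(\vec u)>0$, locate the point $\lambda_0$ where the ray crosses the Nehari manifold $\{K_{\omega}=0\}$, and chain $J_{\omega}(\vec\phi_{\omega})\le J_{\omega}(\vec u^{\lambda_0})\le J_{\omega}(\vec u)$ via \eqref{VC1}. The only cosmetic difference is that you compute $\lambda_0$ explicitly, whereas the paper merely verifies the sign of the coefficient $\sum_j\|\nabla u_j\|_{L^2}^2-3G(\vec u)$ to deduce existence of the crossing.
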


\begin{proof}
Although Lemma \ref{VC2} has been proved by Miyazaki \cite[Section 4]{miyazaki}, 
we give a simple proof based on the idea of Jeanjean and Le Coz \cite{JJLC}. 

Let $\vec u \in H^1(\R^N)^2 \setminus \{ \vec 0 \}$ satisfy 
$P_{\omega} (\vec u)=0$. 
Then, since 
\[
\p_{\lambda} J_{\omega} \bigl( \vec u^{\lambda} \bigr) |_{\lambda=1}
=P_{\omega}(\vec u)=0, \quad 
L (\vec u)=\frac{\alpha}{\alpha+2} M_{\omega} (\vec u)>0, 
\]
we see that the function 
$\lambda \mapsto J_{\omega} \bigl( \vec u^{\lambda} \bigr) 
=\lambda^{\alpha} M_{\omega} (\vec u)-\lambda^{\alpha+2} L(\vec u)$ 
is increasing on the interval $(0,1)$ and decreasing on $(1, \infty)$. 

Thus, we have 
$J_{\omega} \bigl( \vec u^{\lambda} \bigr) \le J_{\omega} (\vec u)$ 
for all $\lambda \in (0, \infty)$. 

Moreover, since 
\[
K_{\omega} \bigl( \vec u^{\lambda} \bigr) 
=2\lambda^{\alpha} M_{\omega}(\vec u)
+\lambda^{\alpha+2}
\left( \sum_{j=1}^{2} \|\nabla u_j\|_{L^2}^2-3 G(\vec u) \right)
\]
and 
\[
\sum_{j=1}^{2} \|\nabla u_j\|_{L^2}^2-3 G(\vec u) 
=-3 L(\vec u)-\frac{1}{2}\sum_{j=1}^{2} \|\nabla u_j\|_{L^2}^2<0, 
\]
there exists $\lambda_0 \in (0, \infty)$ such that 
$K_{\omega} \bigl( \vec u^{\lambda_0} \bigr)=0$. 

Hence, it follows from \eqref{VC1} that 
$J_{\omega} (\vec \phi_{\omega})
\le J_{\omega} \bigl( \vec u^{\lambda_0} \bigr)
\le J_{\omega} (\vec u)$. 
\end{proof}

\begin{remark}\label{remark5} 
Since $P_{\omega} (\vec \phi_{\omega})=0$ 
and 
$(\alpha+2) J_{\omega} (\vec v)-P_{\omega} (\vec v)=2 M_{\omega} (\vec v)$ 
for $\vec v \in  H^1(\R^N)^2$, 
it follows from Lemma \ref{VC2} that 
if $\vec u \in H^1(\R^N)^2 \setminus \{ \vec 0 \}$ satisfies 
$P_{\omega} (\vec u)=0$, 
then $M_{\omega} (\vec \phi_{\omega})\le M_{\omega} (\vec u)$. 
By the definition \eqref{Momega} of $M_{\omega}$, 
under the mass resonance condition $m_2=2 m_1$, 
the inequality $M_{\omega} (\vec \phi_{\omega})\le M_{\omega} (\vec u)$ 
is equivalent to 
$\| \phi_{1, \omega} \|_{L^2}^2+4 \| \phi_{2, \omega} \|_{L^2}^2
\le \| u_1 \|_{L^2}^2+4 \| u_2 \|_{L^2}^2$. 
This estimate plays an important role in the case $m_2=2 m_1$, 
but does not work well in the case $m_2 \ne 2 m_1$ 
(see Section 6.1 of \cite{miyazaki}). 
\end{remark}

In this paper, 
in addition to the estimate in terms of $M_{\omega}$, 
we use the estimate in terms of $L$, 
as in the following lemma. 

\begin{lemma} \label{VC3} 
Let $\vec \phi_{\omega} \in \mathcal G_{\omega}$. 
If $\vec u \in H^1(\R^N)^2$ satisfies $P_{\omega} (\vec u)<0$, 
then 
\[
M_{\omega} (\vec \phi_{\omega})< M_{\omega} (\vec u), \quad 
L (\vec \phi_{\omega})< L(\vec u).
\]
\end{lemma}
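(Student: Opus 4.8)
The plan is to use the scaling $\vec u \mapsto \vec u^{\lambda}$ to move $\vec u$ onto the set $\{P_{\omega}=0\}$, where the (non-strict) inequality recorded in Remark \ref{remark5} applies, and then to recover \emph{strict} inequalities from the fact that the required scaling factor is strictly less than $1$.

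First I would extract the elementary consequences of the hypothesis. Since $P_{\omega}(\vec 0)=0$, the assumption $P_{\omega}(\vec u)<0$ forces $\vec u\ne \vec 0$, and because $\omega^2<\min\{m_1^2, m_2^2/4\}$ we have $M_{\omega}(\vec u)>0$. Writing $P_{\omega}(\vec u)=\alpha M_{\omega}(\vec u)-(\alpha+2)L(\vec u)<0$ then yields $L(\vec u)>\frac{\alpha}{\alpha+2}M_{\omega}(\vec u)>0$, an inequality I will reuse at the end.

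Next I would compute the scaling behavior. Using $(\vec u^{\lambda})^{\mu}=\vec u^{\lambda\mu}$ together with $J_{\omega}(\vec u^{\lambda})=\lambda^{\alpha}M_{\omega}(\vec u)-\lambda^{\alpha+2}L(\vec u)$, one obtains $M_{\omega}(\vec u^{\lambda})=\lambda^{\alpha}M_{\omega}(\vec u)$ and $P_{\omega}(\vec u^{\lambda})=\lambda^{\alpha}\bigl[\alpha M_{\omega}(\vec u)-(\alpha+2)\lambda^2 L(\vec u)\bigr]$. Since $\alpha=4-N>0$ and $L(\vec u)>0$, the bracket is strictly decreasing in $\lambda^2$, positive for small $\lambda$, and equal to $P_{\omega}(\vec u)<0$ at $\lambda=1$; hence there is a unique zero $\lambda_0=\left(\frac{\alpha M_{\omega}(\vec u)}{(\alpha+2)L(\vec u)}\right)^{1/2}$ of $\lambda\mapsto P_{\omega}(\vec u^{\lambda})$, and the sign condition at $\lambda=1$ forces $\lambda_0<1$. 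Then, since $\vec u^{\lambda_0}\ne\vec 0$ satisfies $P_{\omega}(\vec u^{\lambda_0})=0$, Remark \ref{remark5} (a consequence of Lemma \ref{VC2}) gives $M_{\omega}(\vec\phi_{\omega})\le M_{\omega}(\vec u^{\lambda_0})=\lambda_0^{\alpha}M_{\omega}(\vec u)$, and $\lambda_0^{\alpha}<1$ produces the first strict inequality $M_{\omega}(\vec\phi_{\omega})<M_{\omega}(\vec u)$. For the second, I would use $P_{\omega}(\vec\phi_{\omega})=0$, which gives $L(\vec\phi_{\omega})=\frac{\alpha}{\alpha+2}M_{\omega}(\vec\phi_{\omega})$; chaining this with the bound just proved and with $L(\vec u)>\frac{\alpha}{\alpha+2}M_{\omega}(\vec u)$ from the first step yields $L(\vec\phi_{\omega})=\frac{\alpha}{\alpha+2}M_{\omega}(\vec\phi_{\omega})<\frac{\alpha}{\alpha+2}M_{\omega}(\vec u)<L(\vec u)$.

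I do not expect a genuine obstacle here: the argument is essentially routine once the scaling identities for $M_{\omega}$ and $P_{\omega}$ are in hand. The only delicate point is confirming $\lambda_0<1$ strictly, as this is precisely what upgrades the non-strict estimate of Remark \ref{remark5} to the two strict conclusions claimed in the lemma.
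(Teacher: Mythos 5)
Your proof is correct and takes essentially the same route as the paper: rescale $\vec u$ to the unique $\lambda_0\in(0,1)$ with $P_{\omega}\bigl(\vec u^{\lambda_0}\bigr)=0$, apply Lemma \ref{VC2} there, and use $\lambda_0<1$ to upgrade to strict inequalities. The only cosmetic difference is in the $L$-inequality: the paper applies the identity $\alpha J_{\omega}-P_{\omega}=2L$ at $\vec u^{\lambda_0}$ (a second use of Lemma \ref{VC2}), while you chain it from the already-proved $M_{\omega}$-inequality via $L(\vec\phi_{\omega})=\tfrac{\alpha}{\alpha+2}M_{\omega}(\vec\phi_{\omega})$ and $L(\vec u)>\tfrac{\alpha}{\alpha+2}M_{\omega}(\vec u)$; both deductions are equally valid.
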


\begin{proof}
Since 
$P_{\omega} \bigl( \vec u^{\lambda} \bigr)
=\alpha \lambda^{\alpha} M_{\omega} (\vec u) 
-(\alpha+2) \lambda^{\alpha+2} L(\vec u)$ 
and $P_{\omega} (\vec u)<0$, 
we see that $L(\vec u)>0$ and 
there exists $\lambda_0 \in (0,1)$ such that 
$P_{\omega} \bigl( \vec u^{\lambda_0} \bigr)=0$. 

Moreover, since 
$P_{\omega} (\vec \phi_{\omega})=0$ and 
\[
(\alpha+2) J_{\omega} (\vec v)-P_{\omega} (\vec v)=2 M_{\omega} (\vec v), \quad 
\alpha J_{\omega} (\vec v)-P_{\omega} (\vec v)=2 L(\vec v) 
\]
for $\vec v \in H^1(\R^N)^2$, 
it follows from Lemma \ref{VC2} that 
\begin{align*}
&2 M_{\omega} (\vec \phi_{\omega})
=(\alpha+2) J_{\omega} (\vec \phi_{\omega}) \\
&\hspace{16mm} 
\le (\alpha+2) J_{\omega} \bigl( \vec u^{\lambda_0} \bigr) 
=2 M_{\omega} \bigl( \vec u^{\lambda_0} \bigr) 
=2 \lambda_0^{\alpha} M_{\omega} (\vec u)
<2 M_{\omega} (\vec u), \\
&2 L (\vec \phi_{\omega})
=\alpha J_{\omega} (\vec \phi_{\omega})
\le \alpha J_{\omega} \bigl( \vec u^{\lambda_0} \bigr)
=2 L \bigl( \vec u^{\lambda_0} \bigr) 
=2 \lambda_0^{\alpha+2} L (\vec u)
<2 L (\vec u).
\end{align*}

This completes the proof. 
\end{proof} 

\section{Proof of Theorem \ref{thm1}} \label{sect4}

In this section, we give the proof of Theorem \ref{thm1}. 
Throughout this section, we assume that $N\in \{2,3\}$, 
$\omega^2<\min\{m_1^2, \, m_2^2/4\}$, 
and $\vec \phi_{\omega}=(\phi_{1, \omega}, \phi_{2,\omega}) 
\in \mathcal G_{\omega}$ is radially symmetric. 
We put $\alpha=4-N$ and 
$\vec \psi_{\omega}
=\left( i \omega \phi_{1, \omega}, \, 2 i \omega \phi_{2,\omega} \right)$. 

First, we prove that 
the conditions \eqref{SC2} and \eqref{SC11} are equivalent to each other. 
The condition (iii) in Lemma \ref{blowup2} 
is used in the proof of Proposition \ref{key} 
(see \eqref{flam0}). 

\begin{lemma} \label{blowup2} 
The following conditions are equivalent to each other. 
\begin{itemize}
\item [$(\mathrm{i})$] \hspace{0mm} 
$\p_{\lambda}^2 
E \bigl( (\vec \phi_{\omega})^{\lambda}, \, (\vec \psi_{\omega})_{\lambda} \bigr) 
|_{\lambda=1}\le 0$. 
\item [$(\mathrm{ii})$] \hspace{0mm}
$\bigl\{ m_1^2-(5-N) \omega^2 \bigr\} \|\phi_{1,\omega}\|_{L^2}^2
+\bigl\{ m_2^2-4(5-N) \omega^2 \bigr\} \|\phi_{2,\omega}\|_{L^2}^2
\ge 0$. 
\item [$(\mathrm{iii})$] \hspace{0mm}
$\alpha^2 K (\vec \psi_{\omega})\le (\alpha+2) L ( \vec \phi_{\omega})$. 
\end{itemize}
\end{lemma}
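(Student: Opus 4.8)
The plan is to prove the three-way equivalence by going through (iii) as an intermediate form, establishing (i) $\Leftrightarrow$ (iii) by a direct second-derivative computation and (ii) $\Leftrightarrow$ (iii) by an explicit coefficient comparison. Both directions rest on the same two structural facts about the ground state, so it is cleanest to isolate those first.

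For (i) $\Leftrightarrow$ (iii), I would start from the explicit formula \eqref{Elambda} evaluated at $\vec u=\vec\phi_\omega$, $\vec v=\vec\psi_\omega$, namely
\[
E\bigl((\vec\phi_\omega)^{\lambda},(\vec\psi_\omega)_{\lambda}\bigr)
=\lambda^{-\alpha}K(\vec\psi_\omega)+\lambda^{\alpha}M(\vec\phi_\omega)-\lambda^{\alpha+2}L(\vec\phi_\omega),
\]
and differentiate twice, setting $\lambda=1$, to get
\[
\p_{\lambda}^2 E\bigl((\vec\phi_\omega)^{\lambda},(\vec\psi_\omega)_{\lambda}\bigr)\big|_{\lambda=1}
=\alpha(\alpha+1)K(\vec\psi_\omega)+\alpha(\alpha-1)M(\vec\phi_\omega)-(\alpha+1)(\alpha+2)L(\vec\phi_\omega).
\]
The two identities that drive the simplification are, first, the algebraic splitting
\[
M(\vec\phi_\omega)=M_\omega(\vec\phi_\omega)+K(\vec\psi_\omega),
\]
which follows by comparing the definitions of $M$, of $M_\omega$ in \eqref{Momega}, and of $\vec\psi_\omega=(i\omega\phi_{1,\omega},2i\omega\phi_{2,\omega})$ (the full-mass term splits exactly into the $\omega$-shifted mass plus the kinetic energy of $\vec\psi_\omega$), and second, the stationarity $P_\omega(\vec\phi_\omega)=0$ from \eqref{Pomega}, which reads $\alpha M_\omega(\vec\phi_\omega)=(\alpha+2)L(\vec\phi_\omega)$. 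Substituting the splitting collapses the $M$-coefficient to $2\alpha^2$, and then eliminating $M_\omega(\vec\phi_\omega)$ via $P_\omega(\vec\phi_\omega)=0$ reduces the whole expression to
\[
\p_{\lambda}^2 E\bigl((\vec\phi_\omega)^{\lambda},(\vec\psi_\omega)_{\lambda}\bigr)\big|_{\lambda=1}
=2\bigl[\alpha^2 K(\vec\psi_\omega)-(\alpha+2)L(\vec\phi_\omega)\bigr].
\]
Since $\alpha=4-N>0$, this is $\le 0$ precisely when (iii) holds, giving (i) $\Leftrightarrow$ (iii).

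For (iii) $\Leftrightarrow$ (ii), I would again invoke $P_\omega(\vec\phi_\omega)=0$ to rewrite (iii) as $\alpha^2 K(\vec\psi_\omega)\le\alpha M_\omega(\vec\phi_\omega)$, hence $\alpha K(\vec\psi_\omega)\le M_\omega(\vec\phi_\omega)$ after dividing by $\alpha>0$. Then I would insert $K(\vec\psi_\omega)=\tfrac{\omega^2}{2}\|\phi_{1,\omega}\|_{L^2}^2+2\omega^2\|\phi_{2,\omega}\|_{L^2}^2$ together with the definition \eqref{Momega} of $M_\omega$, and collect the coefficients of $\|\phi_{1,\omega}\|_{L^2}^2$ and $\|\phi_{2,\omega}\|_{L^2}^2$. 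Using $1+\alpha=5-N$, the inequality becomes exactly (ii).

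The computation is essentially routine bookkeeping, so I do not expect a genuine obstacle; the only real content is recognizing the mass-splitting $M=M_\omega+K(\vec\psi_\omega)$ (which is what lets the full-mass functional $M$ appearing in $E$ communicate with the $\omega$-shifted $M_\omega$ appearing in $J_\omega$) and applying $P_\omega(\vec\phi_\omega)=0$ at the right moments. The one place where a sign or coefficient slip would most easily occur is in keeping the scaling exponents straight — $\lambda^{-\alpha}$ on the kinetic term $K$ versus $\lambda^{\alpha}$ and $\lambda^{\alpha+2}$ on $M$ and $L$ — so I would double-check the differentiation and the final factor of $2$ carefully.
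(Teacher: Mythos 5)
Your proof is correct and follows essentially the same route as the paper: both compute $\p_{\lambda}^2 E \bigl( (\vec \phi_{\omega})^{\lambda}, (\vec \psi_{\omega})_{\lambda} \bigr) |_{\lambda=1}$ explicitly from \eqref{Elambda} and then simplify using exactly the two identities $P_{\omega}(\vec \phi_{\omega})=0$ and $K(\vec \psi_{\omega})=M(\vec \phi_{\omega})-M_{\omega}(\vec \phi_{\omega})$, arriving at the same reduced expression $2\alpha^2 K(\vec \psi_{\omega})-2(\alpha+2)L(\vec \phi_{\omega})$. The only cosmetic difference is that you pivot the equivalences through (iii) by direct substitution, while the paper pivots through (i) (and obtains (iii) by subtracting $(\alpha-1)H(\vec \phi_{\omega},\vec \psi_{\omega})$); the mathematical content is identical.
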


\begin{proof}
First, by \eqref{Elambda}, we have 
\begin{align}
&E \bigl( (\vec \phi_{\omega})^{\lambda}, 
\, (\vec \psi_{\omega})_{\lambda} \bigr) 
=\lambda^{-\alpha} K (\vec \psi_{\omega})
+\lambda^{\alpha} M (\vec \phi_{\omega})
-\lambda^{\alpha+2} L(\vec \phi_{\omega}), 
\label{El1} \\
&\p_{\lambda}^2 E \bigl( (\vec \phi_{\omega})^{\lambda}, 
\, (\vec \psi_{\omega})_{\lambda} \bigr) 
|_{\lambda=1} 
\label{El0} \\
&=\alpha (\alpha+1) K (\vec \psi_{\omega})
+\alpha (\alpha-1) M (\vec \phi_{\omega})
-(\alpha+1) (\alpha+2) L(\vec \phi_{\omega}). 
\nonumber 
\end{align}
Next, by \eqref{Momega} and \eqref{Pomega}, 
we have 
\begin{align}
&0=P_{\omega} (\vec \phi_{\omega})
=\alpha M_{\omega} (\vec \phi_{\omega})
-(\alpha+2) L (\vec \phi_{\omega}), 
\label{El2} \\
&K (\vec \psi_{\omega})
=\frac{\omega^2}{2} \| \phi_{1,\omega} \|_{L^2}^2
+\frac{4\omega^2}{2} \| \phi_{2,\omega} \|_{L^2}^2
=M (\vec \phi_{\omega})-M_{\omega} (\vec \phi_{\omega}), 
\label{El3}
\end{align}
which imply that 
\begin{align*}
&-\frac{1}{\alpha} 
\p_{\lambda}^2 E \bigl( (\vec \phi_{\omega})^{\lambda}, 
\, (\vec \psi_{\omega})_{\lambda} \bigr) 
|_{\lambda=1} \\
&=-(\alpha+1) K (\vec \psi_{\omega})
-(\alpha-1) M (\vec \phi_{\omega})
+(\alpha+1) M_{\omega} (\vec \phi_{\omega}) \\
&=-2 \alpha M(\vec \phi_{\omega})
+2 (\alpha+1) M_{\omega}(\vec \phi_{\omega}) \\
&=\bigl\{ m_1^2-(\alpha+1) \omega^2 \bigr\} \|\phi_{1,\omega}\|_{L^2}^2
+\bigl\{ m_2^2-4(\alpha+1) \omega^2 \bigr\} \|\phi_{2,\omega}\|_{L^2}^2.
\end{align*}
Since $\alpha=4-N$, we see that (i) is equivalent to (ii). 

Furthermore, by \eqref{El2} and \eqref{El3}, 
we have 
\begin{equation} \label{El4}
H (\vec \phi_{\omega}, \vec \psi_{\omega}) 
=-\alpha K (\vec \psi_{\omega})
+\alpha M (\vec \phi_{\omega})
-(\alpha+2) L(\vec \phi_{\omega}) 
=P_{\omega} (\vec \phi_{\omega})=0, 
\end{equation} 
and 
\begin{align*}
\p_{\lambda}^2 E \bigl( (\vec \phi_{\omega})^{\lambda}, 
\, (\vec \psi_{\omega})_{\lambda} \bigr) 
|_{\lambda=1} 
&=\p_{\lambda}^2 E \bigl( (\vec \phi_{\omega})^{\lambda}, 
\, (\vec \psi_{\omega})_{\lambda} \bigr) 
|_{\lambda=1} 
-(\alpha-1) H(\vec \phi_{\omega}, \vec \psi_{\omega}) \\
&=2 \alpha^2 K (\vec \psi_{\omega})
-2 (\alpha+2) L(\vec \phi_{\omega}). 
\end{align*}
Thus, we see that (i) is equivalent to (iii). 
\end{proof}

Next, we define the action $S_{\omega}$ by 
\begin{equation} \label{Somega}
S_{\omega} (\vec u, \vec v)
=E(\vec u, \vec v)-\omega Q(\vec u, \vec v). 
\end{equation} 
Since $E$ and $Q$ are conserved quantities for \eqref{nlkg}, 
so is $S_{\omega}$. 
Moreover, we have 
\begin{equation} \label{Somega0}
S_{\omega} (\vec u, \vec v)
=J_{\omega} (\vec u)
+\frac{1}{2} \| v_1-i \omega u_1 \|_{L^2}^2
+\frac{1}{2} \| v_2-2 i \omega u_2 \|_{L^2}^2. 
\end{equation} 

We define the set 
\[
\mathcal A_{\omega} 
=\{ (\vec u, \vec v) \in X: 
S_{\omega} (\vec u, \vec v)< S_{\omega} (\vec \phi_{\omega}, \vec \psi_{\omega}),  
\hspace{1mm} 
P_{\omega} (\vec u)<0 \}. 
\]

\begin{lemma} \label{InvSet1} 
Let $(\vec u_0, \vec v_0) \in \mathcal A_{\omega}$, 
and let $(\vec u(t), \p_t \vec u(t))$ be the solution of \eqref{nlkg} 
with $(\vec u(0), \p_t \vec u(0))=(\vec u_0, \vec v_0)$. 
Then, $P_{\omega} (\vec u(t))<0$ 
for all $t\in [0, T_{\max})$. 
\end{lemma}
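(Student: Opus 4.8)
The plan is to prove Lemma \ref{InvSet1} by a continuity/invariance argument, showing that the open region $\{P_\omega < 0\}$ cannot be exited by a trajectory that stays below the ground-state action level. First I would observe that $t \mapsto P_\omega(\vec u(t))$ is continuous on $[0, T_{\max})$, since $(\vec u, \partial_t \vec u) \in C([0,T_{\max}), X)$ and $P_\omega$ is a continuous functional of $\vec u \in H^1(\R^N)^2$. Because $P_\omega(\vec u_0) < 0$ by hypothesis, if the conclusion failed there would be a first time $t_0 \in (0, T_{\max})$ with $P_\omega(\vec u(t_0)) = 0$ while $P_\omega(\vec u(t)) < 0$ for $t \in [0, t_0)$.

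The key step is to derive a contradiction at this first crossing time $t_0$. At $t_0$ the field $\vec u(t_0)$ satisfies $P_\omega(\vec u(t_0)) = 0$, so I must first rule out $\vec u(t_0) = \vec 0$: indeed Lemma \ref{VC3} applied along $[0,t_0)$ gives $L(\vec\phi_\omega) < L(\vec u(t))$, and since $L(\vec\phi_\omega) > 0$ (from \eqref{El2}, as $L(\vec\phi_\omega) = \tfrac{\alpha}{\alpha+2} M_\omega(\vec\phi_\omega) > 0$), the quantity $L(\vec u(t))$ stays bounded away from zero, which passes to the limit $t \to t_0$ and forces $\vec u(t_0) \neq \vec 0$. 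Hence $\vec u(t_0) \in H^1(\R^N)^2 \setminus \{\vec 0\}$ with $P_\omega(\vec u(t_0)) = 0$, so Lemma \ref{VC2} yields
\[
J_\omega(\vec\phi_\omega) \le J_\omega(\vec u(t_0)).
\]

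Now I would use the action decomposition \eqref{Somega0}, which gives $J_\omega(\vec u) \le S_\omega(\vec u, \vec v)$ for any $(\vec u, \vec v) \in X$ because the two squared-norm terms are nonnegative, together with the fact that $S_\omega$ is conserved along the flow (as noted after \eqref{Somega}) and that $S_\omega(\vec\phi_\omega, \vec\psi_\omega) = J_\omega(\vec\phi_\omega)$ (the correction terms vanish since $\vec\psi_\omega = (i\omega\phi_{1,\omega}, 2i\omega\phi_{2,\omega})$). Combining these,
\[
J_\omega(\vec\phi_\omega) \le J_\omega(\vec u(t_0)) \le S_\omega(\vec u(t_0), \partial_t \vec u(t_0)) = S_\omega(\vec u_0, \vec v_0) < S_\omega(\vec\phi_\omega, \vec\psi_\omega) = J_\omega(\vec\phi_\omega),
\]
where the strict inequality is the defining property of $\mathcal A_\omega$. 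This is a contradiction, so no such $t_0$ exists and $P_\omega(\vec u(t)) < 0$ throughout $[0, T_{\max})$.

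The main obstacle I anticipate is the nonvanishing claim $\vec u(t_0) \neq \vec 0$: the strict inequality $J_\omega(\vec\phi_\omega) \le J_\omega(\vec u(t_0))$ from Lemma \ref{VC2} requires $\vec u(t_0)$ to be a genuinely nontrivial element of the Nehari-type manifold $\{P_\omega = 0\}$, and a priori a trajectory could in principle approach $\vec 0$ as $P_\omega \to 0^-$. The lower bound on $L$ furnished by Lemma \ref{VC3} is exactly what closes this gap, which is why the statement of that lemma includes the conclusion $L(\vec\phi_\omega) < L(\vec u)$ rather than only the bound on $M_\omega$; I would make sure to invoke it explicitly rather than treating the nonvanishing as automatic.
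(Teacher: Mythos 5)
Your proposal is correct and takes essentially the same route as the paper: a first-crossing-time contradiction argument that uses Lemma \ref{VC3} to rule out $\vec u(t_0)=\vec 0$, then Lemma \ref{VC2} together with the decomposition \eqref{Somega0} and conservation of $S_{\omega}$ to contradict $S_{\omega}(\vec u_0,\vec v_0)<S_{\omega}(\vec\phi_{\omega},\vec\psi_{\omega})$. The only cosmetic difference is that the paper certifies nonvanishing via the $M_{\omega}$ conclusion of Lemma \ref{VC3} (i.e.\ $0<M_{\omega}(\vec\phi_{\omega})\le M_{\omega}(\vec u(t_0))$ by continuity), whereas you use the parallel $L$ conclusion; both are equally valid.
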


\begin{proof}
Suppose that the conclusion of Lemma \ref{InvSet1} does not hold. 
Then, there exists $t_0 \in (0, T_{\max})$ such that 
$P_{\omega} (\vec u(t_0))=0$ 
and $P_{\omega} (\vec u(t))<0$ for $t \in [0, t_0)$. 

By Lemma \ref{VC3} 
and the continuity of the function $t\mapsto M_{\omega} (\vec u(t))$, 
we see that 
$0<M_{\omega} (\vec \phi_{\omega})\le M_{\omega} (\vec u(t_0))$. 
Thus, we have $\vec u(t_0)\ne \vec 0$, 
and it follows from Lemma \ref{VC2} that 
$J_{\omega}(\vec \phi_{\omega}) \le J_{\omega}(\vec u(t_0))$. 

Moreover, by \eqref{Somega0} 
and the conservation law of $S_{\omega}$, we have 
\begin{equation} \label{Somega1}
S_{\omega}(\vec \phi_{\omega}, \vec \psi_{\omega})
=J_{\omega}(\vec \phi_{\omega}) 
\le J_{\omega}(\vec u(t_0)) 
\le S_{\omega}(\vec u(t_0)), \p_t \vec u(t_0)) 
=S_{\omega} (\vec u_0, \vec v_0), 
\end{equation}
which contradicts the assumption that $(\vec u_0, \vec v_0) \in \mathcal A_{\omega}$. 

This completes the proof. 
\end{proof} 

The following proposition is the key to the proof of Theorem \ref{thm1}. 

\begin{proposition} \label{key} 
Assume \eqref{SC2}. 
If $(\vec u, \vec v) \in X$ satisfies 
\begin{equation} \label{HQP}
H (\vec u, \vec v) \le 0, \quad 
Q (\vec u, \vec v) = Q (\vec \phi_{\omega}, \vec \psi_{\omega}), \quad 
P_{\omega} (\vec u)<0,  
\end{equation} 
then 
\begin{equation} \label{HQP2}
\alpha E (\vec \phi_{\omega}, \vec \psi_{\omega})
\le \alpha E (\vec u, \vec v) - H (\vec u, \vec v).
\end{equation} 
\end{proposition}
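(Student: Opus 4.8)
The plan is to turn \eqref{HQP2} into a one–dimensional comparison along the charge–preserving scaling \eqref{scale1}, and then to close it using the variational estimates of Section~\ref{sect:CGS}. The starting point is the elementary identity, valid for every $(\vec u,\vec v)\in X$,
\[
\alpha E(\vec u,\vec v)-H(\vec u,\vec v)=2\alpha K(\vec v)+2L(\vec u),
\]
which follows from $E(\vec u,\vec v)=K(\vec v)+M(\vec u)-L(\vec u)$ and the definition \eqref{def-H} of $H$. Since $H(\vec\phi_\omega,\vec\psi_\omega)=P_\omega(\vec\phi_\omega)=0$ by \eqref{El4}, the same identity at the ground state gives $\alpha E(\vec\phi_\omega,\vec\psi_\omega)=2\alpha K(\vec\psi_\omega)+2L(\vec\phi_\omega)$. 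Thus \eqref{HQP2} is equivalent to $\alpha K(\vec v)+L(\vec u)\ge \alpha K(\vec\psi_\omega)+L(\vec\phi_\omega)$, and the whole proof reduces to this inequality.

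First I would record two lower bounds for $K(\vec v)$. Writing $R(\vec u)=M(\vec u)-M_\omega(\vec u)=\tfrac{\omega^2}{2}\|u_1\|_{L^2}^2+2\omega^2\|u_2\|_{L^2}^2$, the charge constraint $Q(\vec u,\vec v)=Q(\vec\phi_\omega,\vec\psi_\omega)$ together with the Cauchy--Schwarz inequality applied to $Q(\vec u,\vec v)=(v_1,iu_1)_{L^2}+2(v_2,iu_2)_{L^2}$ yields
\[
K(\vec v)\ \ge\ \frac{K(\vec\psi_\omega)\,R(\vec\phi_\omega)}{R(\vec u)};
\]
the crucial structural fact here is that $\vec\psi_\omega$ is exactly the minimizer of $K$ under fixed charge at $\vec\phi_\omega$, so this bound is sharp at the ground state. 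The hypothesis $H(\vec u,\vec v)\le 0$ furnishes the complementary bound $\alpha K(\vec v)\ge \alpha M_\omega(\vec u)+\alpha R(\vec u)-(\alpha+2)L(\vec u)$.

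Next I would bring in the scaling. Since $P_\omega(\vec u)<0$, Lemma~\ref{VC3} gives $L(\vec u)>L(\vec\phi_\omega)>0$, so I may set $\lambda_0=\bigl(L(\vec u)/L(\vec\phi_\omega)\bigr)^{1/(\alpha+2)}>1$, which makes $L(\vec u)=\lambda_0^{\alpha+2}L(\vec\phi_\omega)$. With this calibration it suffices to prove $K(\vec v)\ge \lambda_0^{-\alpha}K(\vec\psi_\omega)$: granting it,
\[
\alpha K(\vec v)+L(\vec u)\ \ge\ \alpha\lambda_0^{-\alpha}K(\vec\psi_\omega)+\lambda_0^{\alpha+2}L(\vec\phi_\omega)\ \ge\ \alpha K(\vec\psi_\omega)+L(\vec\phi_\omega),
\]
where the last step uses that the convex function $\lambda\mapsto \alpha\lambda^{-\alpha}K(\vec\psi_\omega)+\lambda^{\alpha+2}L(\vec\phi_\omega)$ is nondecreasing on $[1,\infty)$ because its derivative at $\lambda=1$ equals $(\alpha+2)L(\vec\phi_\omega)-\alpha^2K(\vec\psi_\omega)\ge 0$, i.e.\ precisely condition $(\mathrm{iii})$ of Lemma~\ref{blowup2} (equivalently \eqref{SC2}). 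Through the Cauchy--Schwarz bound, the reduced claim $K(\vec v)\ge\lambda_0^{-\alpha}K(\vec\psi_\omega)$ is implied by $R(\vec u)\le R(\vec\phi_\omega)\bigl(L(\vec u)/L(\vec\phi_\omega)\bigr)^{\alpha/(\alpha+2)}$, and in this regime the argument is finished.

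The hard part is the opposite regime, where $R(\vec u)$ is too large for Cauchy--Schwarz and one must instead rely on $H(\vec u,\vec v)\le 0$. The difficulty is genuinely structural: the charge controls the weight $R(\vec u)\sim\|u_1\|_{L^2}^2+4\|u_2\|_{L^2}^2$, whereas the Nehari/$P_\omega$ analysis controls $M_\omega(\vec u)$, and these two quadratic forms are proportional only under the mass resonance condition $m_2=2m_1$ (cf.\ Remark~\ref{remark5})---exactly the reason the arguments of \cite{miyazaki,OT} do not carry over. To treat it I would sharpen Lemma~\ref{VC3}: scaling $\vec u$ to the scale $s\in(0,1)$ with $P_\omega(\vec u^{\,s})=0$ and invoking Lemma~\ref{VC2} yields not only $M_\omega(\vec u)>M_\omega(\vec\phi_\omega)$ but the quantitative estimate
\[
\frac{M_\omega(\vec u)}{M_\omega(\vec\phi_\omega)}\ \ge\ \Bigl(\frac{L(\vec u)}{L(\vec\phi_\omega)}\Bigr)^{\alpha/(\alpha+2)}\!\!,\qquad\text{i.e.}\quad M_\omega(\vec u)\ge \lambda_0^{\alpha}\,M_\omega(\vec\phi_\omega).
\]
Feeding this, together with $\tfrac{\alpha+2}{\alpha}L(\vec\phi_\omega)=M_\omega(\vec\phi_\omega)$ and \eqref{SC2}, into the $H\le 0$ bound is what should recover $K(\vec v)\ge\lambda_0^{-\alpha}K(\vec\psi_\omega)$ in the remaining regime. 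I expect this to be the most delicate point: the mass-dependent comparison between $R(\vec u)$ and $M_\omega(\vec u)$ and the refined variational bound above are correlated through the very membership of $\vec u$ in the scaling orbit of a ground state, so they cannot be used as independent constraints, and it is here that the idea imported from \cite{ohta18} is needed.
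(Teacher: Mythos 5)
Your opening reduction is correct and is in fact identical to the paper's: since $\alpha E(\vec u,\vec v)-H(\vec u,\vec v)=2\alpha K(\vec v)+2L(\vec u)$ and $H(\vec\phi_\omega,\vec\psi_\omega)=0$, the claim \eqref{HQP2} is equivalent to $\alpha K(\vec v)+L(\vec u)\ge \alpha K(\vec\psi_\omega)+L(\vec\phi_\omega)$. Your Cauchy--Schwarz bound, the monotonicity of $h(\lambda)=\alpha\lambda^{-\alpha}K(\vec\psi_\omega)+\lambda^{\alpha+2}L(\vec\phi_\omega)$ on $[1,\infty)$ under condition $(\mathrm{iii})$ of Lemma \ref{blowup2}, and even your sharpened Lemma \ref{VC3}, $M_\omega(\vec u)\ge\lambda_0^{\alpha}M_\omega(\vec\phi_\omega)$, are all correct. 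The genuine gap is the ``hard regime'': there you only assert the bounds ``should recover'' $K(\vec v)\ge\lambda_0^{-\alpha}K(\vec\psi_\omega)$, and this regime is the whole content of the proposition. Moreover, the tools you list cannot close it. Write $r=R(\vec u)$, $R_0=R(\vec\phi_\omega)=K(\vec\psi_\omega)$, $\ell_0=L(\vec\phi_\omega)$, and put $M_\omega(\vec u)$ at its minimal allowed value $\lambda_0^{\alpha}\tfrac{\alpha+2}{\alpha}\ell_0$ (the worst case, since $M_\omega(\vec u)$ enters only the $H\le0$ bound, with positive sign). Your constraints then reduce to
\[
K(\vec v)\ \ge\ \max\Bigl\{\tfrac{R_0^{2}}{r},\ r-c\Bigr\},
\qquad
c=\tfrac{\alpha+2}{\alpha}\,\ell_0\,\lambda_0^{\alpha}\bigl(\lambda_0^{2}-1\bigr),
\]
and both entries of the maximum fall below the target $\lambda_0^{-\alpha}R_0$ exactly when $\lambda_0^{\alpha}R_0<r<\lambda_0^{-\alpha}R_0+c$. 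This window is nonempty whenever $R_0(\lambda_0^{\alpha}-\lambda_0^{-\alpha})<c$; since \eqref{SC2} provides only the \emph{upper} bound $R_0\le\tfrac{\alpha+2}{\alpha^{2}}\ell_0$ (no lower bound), and since $\tfrac{1}{\alpha}(1-\lambda_0^{-2\alpha})<\lambda_0^{2}-1$ for every $\lambda_0>1$, the window is nonempty for every $\lambda_0>1$ and every admissible $R_0$. So no algebraic combination of your listed inequalities, used as independent scalar constraints, can yield the reduced claim (this does not construct a counterexample in $X$, but the burden is on you). Indeed your reduced claim is \emph{strictly stronger} than the conclusion of Proposition \ref{key} --- that is precisely what your monotonicity step quantifies --- and the paper never proves it.

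What is missing is the mechanism by which the paper uses the charge constraint \emph{exactly} rather than through Cauchy--Schwarz, namely Lemma \ref{VC4}. The paper rescales the given pair down rather than the ground state up: with $\mu=1/\lambda_0\in(0,1)$ one has $L(\vec u^{\mu})=L(\vec\phi_\omega)$ and, by \eqref{Qlambda}, $Q(\vec u^{\mu},\vec v_{\mu})=Q(\vec\phi_\omega,\vec\psi_\omega)$; Lemma \ref{VC4} --- whose proof combines Lemmas \ref{VC3} and \ref{VC2} with the decomposition \eqref{Somega0} of the action, i.e.\ exactly the correlation between $\vec v$ and $\vec u$ that your independent bounds discard --- then gives the full energy comparison $E(\vec\phi_\omega,\vec\psi_\omega)\le E(\vec u^{\mu},\vec v_{\mu})$. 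Adding $-\mu^{\alpha}H(\vec u,\vec v)\ge0$ yields $\alpha E(\vec\phi_\omega,\vec\psi_\omega)\le f(\mu)$, where $f(\lambda)=\alpha(\lambda^{-\alpha}+\lambda^{\alpha})K(\vec v)-\bigl\{\alpha\lambda^{\alpha+2}-(\alpha+2)\lambda^{\alpha}\bigr\}L(\vec u)$, and the proof closes with the purely scalar inequality $f(\mu)\le f(1)$, proved via Lemma \ref{function-g} with $\beta=1+2/\alpha$, $s=\mu^{\alpha}$. Note the structural differences from your plan: the paper's dichotomy is on $K(\vec v)$ versus $K(\vec\psi_\omega)$ (the case $K(\vec v)\ge K(\vec\psi_\omega)$ being trivial), not on the size of $R(\vec u)$; the case hypothesis $K(\vec v)\le K(\vec\psi_\omega)$ feeds into condition $(\mathrm{iii})$ as an upper bound $\alpha^{2}K(\vec v)\le(\alpha+2)\mu^{\alpha+2}L(\vec u)$; and the comparison is between values of $f$, which couples the unknowns $K(\vec v)$ and $L(\vec u)$, rather than a pointwise lower bound on $K(\vec v)$ alone. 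Your closing remark that the constraints ``cannot be used as independent constraints'' identifies the difficulty correctly, but Lemma \ref{VC4} applied to the rescaled pair is the device that resolves it, and without it your outline is not a proof.
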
 

We give the proof of Proposition \ref{key} in the next section. 
Once we have the key Proposition \ref{key}, 
we can prove Theorem \ref{thm1} in the same way as in \cite{miyazaki, OT}. 
For the sake of completeness, 
we give the proof of Theorem \ref{thm1} using Proposition \ref{key}. 
We define the set 
\begin{align*}
\mathcal B_{\omega} 
=\{ (\vec u, \vec v) \in X: 
&\hspace{1mm} 
E(\vec u, \vec v)< E (\vec \phi_{\omega}, \vec \psi_{\omega}),  \hspace{1mm}
Q(\vec u, \vec v)=Q (\vec \phi_{\omega}, \vec \psi_{\omega}),  \\
&\hspace{5mm} 
H(\vec u, \vec v)<0, \hspace{1mm} 
P_{\omega} (\vec u)<0 \}. 
\end{align*}

\begin{lemma} \label{InvSet2} 
Assume \eqref{SC2}. 
Let $(\vec u_0, \vec v_0) \in \mathcal B_{\omega}$, 
and let $(\vec u(t), \p_t \vec u(t))$ be the solution of \eqref{nlkg} 
with $(\vec u(0), \p_t \vec u(0))=(\vec u_0, \vec v_0)$. 
Then, 
\[P_{\omega} (\vec u(t))<0, \quad 
H (\vec u(t), \p_t \vec u(t))<0
\]
for all $t\in [0, T_{\max})$. 
\end{lemma}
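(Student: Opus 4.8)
The plan is to exploit the inclusion $\mathcal B_{\omega} \subset \mathcal A_{\omega}$ together with a continuity argument driven by Proposition \ref{key}. First I would observe that, since $E$ and $Q$ are conserved along the flow, the two defining conditions $E(\vec u(t), \p_t \vec u(t)) < E(\vec \phi_{\omega}, \vec \psi_{\omega})$ and $Q(\vec u(t), \p_t \vec u(t)) = Q(\vec \phi_{\omega}, \vec \psi_{\omega})$ hold automatically for all $t \in [0, T_{\max})$. In particular, using $S_{\omega} = E - \omega Q$ from \eqref{Somega}, any $(\vec u_0, \vec v_0) \in \mathcal B_{\omega}$ satisfies $S_{\omega}(\vec u_0, \vec v_0) < S_{\omega}(\vec \phi_{\omega}, \vec \psi_{\omega})$ and $P_{\omega}(\vec u_0) < 0$, so $(\vec u_0, \vec v_0) \in \mathcal A_{\omega}$. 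Lemma \ref{InvSet1} then yields $P_{\omega}(\vec u(t)) < 0$ for all $t \in [0, T_{\max})$, which is the first assertion.

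For the second assertion I would argue by contradiction using continuity. The map $t \mapsto H(\vec u(t), \p_t \vec u(t))$ is continuous on $[0, T_{\max})$, since $H$ in \eqref{def-H} is a continuous functional on $X$ (it is built from the $L^2$-norms entering $K$ and $M$, and from $L$, whose interaction term $G$ is continuous on $H^1(\R^N)^2$ by Sobolev embedding for $N \le 3$) and $(\vec u, \p_t \vec u) \in C([0, T_{\max}), X)$. Assuming $H$ fails to stay negative, there is a first time $t_1 \in (0, T_{\max})$ with $H(\vec u(t_1), \p_t \vec u(t_1)) = 0$ while $H < 0$ on $[0, t_1)$. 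At $t_1$ the triple of conditions in \eqref{HQP} is met: $H(\vec u(t_1), \p_t \vec u(t_1)) = 0 \le 0$; $Q(\vec u(t_1), \p_t \vec u(t_1)) = Q(\vec \phi_{\omega}, \vec \psi_{\omega})$ by charge conservation; and $P_{\omega}(\vec u(t_1)) < 0$ by the first assertion already established.

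Since \eqref{SC2} is assumed, Proposition \ref{key} applies at $t = t_1$ and gives $\alpha E(\vec \phi_{\omega}, \vec \psi_{\omega}) \le \alpha E(\vec u(t_1), \p_t \vec u(t_1)) - H(\vec u(t_1), \p_t \vec u(t_1))$. Using $H(\vec u(t_1), \p_t \vec u(t_1)) = 0$, energy conservation, and $\alpha = 4 - N > 0$, this reduces to $E(\vec \phi_{\omega}, \vec \psi_{\omega}) \le E(\vec u_0, \vec v_0)$, contradicting $E(\vec u_0, \vec v_0) < E(\vec \phi_{\omega}, \vec \psi_{\omega})$. Hence $H(\vec u(t), \p_t \vec u(t)) < 0$ for all $t \in [0, T_{\max})$. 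The real content of the argument is packed into Proposition \ref{key}; once that inequality is in hand, the only delicate points here are the inclusion $\mathcal B_{\omega} \subset \mathcal A_{\omega}$ and the continuity needed to locate the first crossing time $t_1$, both of which are routine. I expect no genuine obstacle at this stage beyond correctly invoking Proposition \ref{key} with its sign conventions.
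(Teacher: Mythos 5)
Your proposal is correct and follows essentially the same route as the paper: deduce $P_{\omega}(\vec u(t))<0$ from the inclusion $\mathcal B_{\omega}\subset \mathcal A_{\omega}$ and Lemma \ref{InvSet1}, then rule out a zero of $H$ along the flow by applying Proposition \ref{key} at such a time together with conservation of $E$ and $Q$. The only differences are cosmetic: you spell out the inclusion via $S_{\omega}=E-\omega Q$ and make the continuity/first-crossing-time argument explicit, both of which the paper leaves implicit.
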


\begin{proof}
First, since 
$(\vec u_0, \vec v_0) \in \mathcal B_{\omega} \subset \mathcal A_{\omega}$, 
it follows from Lemma \ref{InvSet1} that $P_{\omega} (\vec u(t))<0$ 
for all $t \in [0, T_{\max})$. 

Next, 
suppose that there exists $t_0 \in (0, T_{\max})$ such that 
$H (\vec u(t_0), \p_t \vec u(t_0))=0$. 
Then, since $P_{\omega} (\vec u(t_0))<0$, 
it follows from Proposition \ref{key} 
and the conservation laws of $E$ and $Q$ that 
\[
E (\vec \phi_{\omega}, \vec \psi_{\omega})
\le E (\vec u(t_0), \p_t \vec u(t_0))-\frac{1}{\alpha} H (\vec u(t_0), \p_t \vec u(t_0))
=E (\vec u_0, \vec v_0),
\]
which contradicts the assumption that 
$(\vec u_0, \vec v_0) \in \mathcal B_{\omega}$. 

Hence, we conclude that $H (\vec u(t), \p_t \vec u(t))<0$ 
for all $t \in [0, T_{\max})$. 
\end{proof} 

\begin{lemma} \label{blowup1} 
Assume \eqref{SC2}. 
Assume that  $\vec u_0$ and $\vec v_0$ are radially symmetric, 
and $(\vec u_0, \vec v_0) \in \mathcal B_{\omega}$. 
Then, $T_{\max} (\vec u_0, \vec v_0)<\infty$, i.e., 
the solution $\vec u(t)$ of \eqref{nlkg} 
with $(\vec u(0), \p_t \vec u(0))=(\vec u_0, \vec v_0)$ 
blows up in finite time. 
\end{lemma}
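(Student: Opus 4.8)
The plan is to argue by contradiction, assuming $T_{\max}(\vec u_0,\vec v_0)=\infty$, and to force the localized virial quantity $I_\rho(\vec u(t),\p_t\vec u(t))$ to grow without bound while it is at the same time controlled by the $X$-norm. Since $\vec u_0$ and $\vec v_0$ are radially symmetric, uniqueness of solutions keeps $(\vec u(t),\p_t\vec u(t))$ radial on $[0,T_{\max})$, so that Proposition~\ref{virial2} is available. The decisive preliminary step is to upgrade the strict sign from Lemma~\ref{InvSet2} to a \emph{uniform} one: because $(\vec u_0,\vec v_0)\in\mathcal B_\omega\subset\mathcal A_\omega$, Lemma~\ref{InvSet2} gives $P_\omega(\vec u(t))<0$ and $H(\vec u(t),\p_t\vec u(t))<0$ for all $t$, while the charge $Q(\vec u(t),\p_t\vec u(t))=Q(\vec\phi_\omega,\vec\psi_\omega)$ is conserved. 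Thus the hypotheses \eqref{HQP} of the key Proposition~\ref{key} hold at every time, and combined with conservation of $E$ this yields $H(\vec u(t),\p_t\vec u(t))\le-\delta$ on $[0,\infty)$, where
\[
\delta:=\alpha\bigl\{E(\vec\phi_\omega,\vec\psi_\omega)-E(\vec u_0,\vec v_0)\bigr\}>0
\]
by the definition of $\mathcal B_\omega$. This is precisely where the new Proposition~\ref{key} enters the blow-up mechanism.

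Inserting $H(\vec u(t),\p_t\vec u(t))\le-\delta$ into \eqref{virial3} gives
\[
\frac{d}{dt}I_\rho(\vec u(t),\p_t\vec u(t))\ge\delta-C_1\rho^{-1}\|\vec u(t)\|_{H^1}^2-C_2\rho^{-(N-1)/2}\|\vec u(t)\|_{H^1}^3 .
\]
If the trajectory is bounded in $H^1$, say $\|\vec u(t)\|_{H^1}\le R$, I would fix $\rho$ so large that the last two terms are at most $\delta/2$; then $\frac{d}{dt}I_\rho\ge\delta/2$, so $I_\rho(\vec u(t),\p_t\vec u(t))\to+\infty$. On the other hand, the definition of $I_\rho$ together with $\Psi_\rho\le2\rho$ and $\Phi_\rho\le N$ gives $|I_\rho(\vec u,\vec v)|\le C(\rho)\|\vec u\|_{H^1}\|\vec v\|_{L^2}$, and conservation of energy bounds $\|\p_t\vec u(t)\|_{L^2}$ in terms of $R$, so $I_\rho$ would stay bounded, a contradiction. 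The radial decay factor $\rho^{-(N-1)/2}$ supplied by Strauss's estimate is exactly what renders the cubic remainder negligible for large $\rho$, and is the reason the argument needs $N\ge2$.

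The main obstacle is therefore to rule out an unbounded $H^1$-trajectory: the conserved $E$, $Q$, $S_\omega$ together with the sign conditions $P_\omega<0$ and $H\le-\delta$ bound $K$, $M$ and $L$ only from below, so no uniform $H^1$ bound is available for free. I expect to close this gap exactly as in \cite{miyazaki, OT}, by sharpening the differential inequality into a Riccati-type estimate: using the identity $-H(\vec u(t),\p_t\vec u(t))=2(\alpha+1)K(\p_t\vec u(t))+2M(\vec u(t))-(\alpha+2)E(\vec u_0,\vec v_0)$ (which follows from \eqref{def-E} and \eqref{def-H}), the Cauchy--Schwarz bound $I_\rho^2\lesssim\rho^2\|\vec u\|_{H^1}^2\,K(\p_t\vec u)$, and a Gagliardo--Nirenberg control of $G$ to reabsorb the remainder, one aims at $\frac{d}{dt}I_\rho\gtrsim I_\rho^2$ once $I_\rho$ is large, which makes $I_\rho$ blow up in finite time and hence contradicts $T_{\max}=\infty$ through the blow-up alternative. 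Making this reabsorption quantitative, uniformly in $t$ and in the radial setting, is the delicate point, and is the step inherited from \cite{miyazaki, OT}.
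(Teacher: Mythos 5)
Your first half reproduces the paper's own argument: radial symmetry is propagated by uniqueness, Lemma \ref{InvSet2} keeps $P_{\omega}(\vec u(t))<0$ and $H(\vec u(t),\p_t\vec u(t))<0$, Proposition \ref{key} together with the conservation of $E$ and $Q$ upgrades this to the uniform bound $H(\vec u(t),\p_t\vec u(t))\le-\delta$ with $\delta=\alpha\{E(\vec\phi_{\omega},\vec\psi_{\omega})-E(\vec u_0,\vec v_0)\}>0$, and then Proposition \ref{virial2} with a large fixed $\rho$ forces $I_{\rho}(\vec u(t),\p_t\vec u(t))\to\infty$ while $|I_{\rho}|\le C(\rho)\|\vec u(t)\|_{H^1}\|\p_t\vec u(t)\|_{L^2}$ stays bounded --- \emph{provided} the trajectory is bounded in $X$. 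The genuine gap is that you leave this boundedness unproved. The paper settles it at the outset: by Lemma \ref{InvSet2} and Lemma \ref{VC3}, together with \eqref{def-L}, one has $0<L(\vec\phi_{\omega})<L(\vec u(t))\le G(\vec u(t))$ for all $t$, and this uniform positive lower bound on the interaction term is exactly the hypothesis of Proposition 5 of \cite{miyazaki} (a Cazenave-type uniform estimate \cite{cazenave85}, proved by a Levine concavity argument on the $L^2$-norm), which asserts that a \emph{global} solution with this property is uniformly bounded in $X$. Your remark that the invariant sign conditions bound the relevant quantities ``only from below'' misses the point that the lower bound $L(\vec u(t))>L(\vec\phi_{\omega})>0$ supplied by Lemma \ref{VC3} is precisely the input that unlocks this known boundedness result.

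Your proposed substitute --- a Riccati inequality $\frac{d}{dt}I_{\rho}\gtrsim I_{\rho}^2$ for the localized virial quantity --- both mischaracterizes \cite{miyazaki,OT} (those papers also invoke the Cazenave-type uniform bound; the concavity argument there is run on $\|u(t)\|_{L^2}^2$, not on $I_{\rho}$) and does not survive inspection. Indeed, your own identity $-H=2(\alpha+1)K(\p_t\vec u)+2M(\vec u)-(\alpha+2)E(\vec u_0,\vec v_0)$ shows that $-H$ controls only $K+M$ up to a constant, whereas your Cauchy--Schwarz bound gives $I_{\rho}^2\lesssim\rho^2\|\vec u\|_{H^1}^2K(\p_t\vec u)$, which contains the product $\|\nabla\vec u\|_{L^2}^2\,K(\p_t\vec u)$; neither $-H$ nor any conserved quantity dominates this product along a trajectory with $\|\nabla\vec u(t)\|_{L^2}\to\infty$, so the inequality $\frac{d}{dt}I_{\rho}\gtrsim I_{\rho}^2$ cannot be established along the route you sketch. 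In short, the conditional part of your argument is correct and identical to the paper's, but the one place where a substantive input is needed --- excluding unbounded global trajectories --- is not closed, and the paper closes it by citation to \cite{miyazaki} rather than by any new virial mechanism.
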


\begin{proof}
Suppose that $T_{\max} (\vec u_0, \vec v_0)=\infty$. 
Then, it follows from Lemma \ref{InvSet2} that 
$P_{\omega} (\vec u(t))<0$ for all $t \in [0, \infty)$. 
Moreover, by Lemma \ref{VC3} and \eqref{def-L}, 
we see that 
\[
0<L (\vec \phi_{\omega})< L(\vec u(t)) \le G (\vec u(t)) 
\]
for all $t \in [0, \infty)$. 
Thus, by Proposition 5 of \cite{miyazaki} 
(see also \cite{cazenave85}), 
we have 
\[
C_0:=\sup_{t \in [0, \infty)} \| (\vec u(t), \p_t \vec u(t)) \|_X<\infty. 
\]

We put 
\[
\delta =\frac{\alpha}{2} 
\bigl\{ E (\vec \phi_{\omega}, \vec \psi_{\omega}) -E (\vec u_0, \vec v_0) \bigr\}. 
\]
Then, since $(\vec u_0, \vec v_0) \in \mathcal B_{\omega}$, 
we have $\delta>0$. 
Moreover, it follows from Proposition \ref{key}, Lemma \ref{InvSet2} 
and the conservation laws of $E$ and $Q$ that 
\[
H (\vec u(t), \p_t \vec u(t)) 
\le \alpha E (\vec u_0, \vec v_0) -\alpha E (\vec \phi_{\omega}, \vec \psi_{\omega})
=-2 \delta, 
\quad t\in [0, \infty). 
\]
Furthermore, 
since $\vec u(t)$ and $\p_t \vec u(t)$ are radially symmetric for all $t\in [0, \infty)$, 
it follows from Proposition \ref{virial2} that 
\begin{align*}
&-\frac{d}{dt} I_{\rho} (\vec u(t), \p_t \vec u(t)) \\
&\le H (\vec u(t), \p_t \vec u(t)) 
+C_1 \rho^{-1} \|\vec u(t)\|_{H^1}^2
+C_2 \rho^{-(N-1)/2} \|\vec u(t)\|_{H^1}^3 \\
&\le -2 \delta 
+C_1 C_0^2 \rho^{-1} +C_2 C_0^3 \rho^{-(N-1)/2} 
\end{align*}
for all $t\in [0, \infty)$ and $\rho>0$. 
Thus, there exists $\rho_0>0$ such that 
\[
\frac{d}{dt} I_{\rho_0} (\vec u(t), \p_t \vec u(t)) \ge \delta, 
\quad t\in [0, \infty), 
\]
which implies that 
\begin{equation} \label{Irho1}
\lim_{t \to \infty}  I_{\rho_0} (\vec u(t), \p_t \vec u(t)) 
=\infty. 
\end{equation} 

On the other hand, by the definition of $I_{\rho_0}$, 
there exists a constant $C_3=C_3(\rho_0, N)>0$ such that 
\[
I_{\rho_0} (\vec u(t), \p_t \vec u(t)) 
\le C_3 \|\vec u(t)\|_{H^1} \| \p_t \vec u(t) \|_{L^2}
\le C_3 C_0^2, 
\quad t\in [0, \infty), 
\] 
which contradicts \eqref{Irho1}. 

Hence, we conclude that 
$T_{\max} (\vec u_0, \vec v_0)<\infty$. 
\end{proof}

\begin{lemma} \label{blowup3} 
Assume \eqref{SC2}. Then, 
$\bigl( (\vec \phi_{\omega})^{\lambda},  \, (\vec \psi_{\omega})_{\lambda} \bigr) 
\in \mathcal B_{\omega}$ 
for all $\lambda \in (1, \infty)$. 
\end{lemma}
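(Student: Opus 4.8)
The plan is to verify directly the four membership conditions in the definition of $\mathcal B_\omega$ for the scaled pair $\bigl((\vec\phi_\omega)^\lambda,(\vec\psi_\omega)_\lambda\bigr)$ when $\lambda>1$. Two of them are immediate. The charge condition $Q\bigl((\vec\phi_\omega)^\lambda,(\vec\psi_\omega)_\lambda\bigr)=Q(\vec\phi_\omega,\vec\psi_\omega)$ is exactly \eqref{Qlambda}, since the scaling \eqref{scale1} preserves $Q$. For the Nehari-type condition, I recall from the proof of Lemma \ref{VC3} that $P_\omega\bigl((\vec\phi_\omega)^\lambda\bigr)=\alpha\lambda^\alpha M_\omega(\vec\phi_\omega)-(\alpha+2)\lambda^{\alpha+2}L(\vec\phi_\omega)$. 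Using $P_\omega(\vec\phi_\omega)=0$, i.e. $\alpha M_\omega(\vec\phi_\omega)=(\alpha+2)L(\vec\phi_\omega)$ with $L(\vec\phi_\omega)>0$, this reduces to $(\alpha+2)L(\vec\phi_\omega)(\lambda^\alpha-\lambda^{\alpha+2})$, which is strictly negative for $\lambda>1$ because $\alpha=4-N>0$.

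The remaining two conditions, the strict energy decrease and $H<0$, I would reduce to a single monotonicity statement. Set $f(\lambda):=E\bigl((\vec\phi_\omega)^\lambda,(\vec\psi_\omega)_\lambda\bigr)=\lambda^{-\alpha}K(\vec\psi_\omega)+\lambda^\alpha M(\vec\phi_\omega)-\lambda^{\alpha+2}L(\vec\phi_\omega)$ by \eqref{El1}. Reading off the scaling of each term from \eqref{Elambda} and comparing with \eqref{def-H}, one checks the identity $H\bigl((\vec\phi_\omega)^\lambda,(\vec\psi_\omega)_\lambda\bigr)=\lambda f'(\lambda)$. Since $f'(1)=H(\vec\phi_\omega,\vec\psi_\omega)=0$ by \eqref{El4}, both desired inequalities follow at once once I show that $f'(\lambda)<0$ for every $\lambda>1$: this gives $f(\lambda)<f(1)=E(\vec\phi_\omega,\vec\psi_\omega)$ and $H\bigl((\vec\phi_\omega)^\lambda,(\vec\psi_\omega)_\lambda\bigr)=\lambda f'(\lambda)<0$ simultaneously.

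To prove $f'(\lambda)<0$ on $(1,\infty)$ I would multiply by the positive factor $\lambda^{\alpha+1}$ and study $g(\lambda):=\lambda^{\alpha+1}f'(\lambda)=-\alpha K(\vec\psi_\omega)+\alpha\lambda^{2\alpha}M(\vec\phi_\omega)-(\alpha+2)\lambda^{2\alpha+2}L(\vec\phi_\omega)$, which has the same sign as $f'$. Differentiating eliminates the constant $K$-term and yields $g'(\lambda)=2\lambda^{2\alpha-1}\bigl[\alpha^2 M(\vec\phi_\omega)-(\alpha+1)(\alpha+2)\lambda^2 L(\vec\phi_\omega)\bigr]$, whose bracket is strictly decreasing in $\lambda$ because $L(\vec\phi_\omega)>0$. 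Its value at $\lambda=1$ equals $\tfrac12 g'(1)=\tfrac12 f''(1)=\tfrac12\p_\lambda^2 E\bigl((\vec\phi_\omega)^\lambda,(\vec\psi_\omega)_\lambda\bigr)|_{\lambda=1}$, which is $\le 0$ precisely by assumption \eqref{SC2} (equivalently condition (i) of Lemma \ref{blowup2}). Hence the bracket is negative for all $\lambda>1$, so $g'(\lambda)<0$ there; combined with $g(1)=f'(1)=0$, monotonicity gives $g(\lambda)<0$ and therefore $f'(\lambda)<0$ for every $\lambda>1$.

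The main obstacle is exactly this last step: upgrading the local, second-order information at $\lambda=1$ carried by \eqref{SC2} to a sign valid on the whole half-line $\lambda>1$. Because $f'(\lambda)$ is a combination of three powers of $\lambda$ with mixed signs, knowing only $f'(1)=0$ and $f''(1)\le 0$ does not by itself rule out $f'$ turning positive for large $\lambda$. The device of differentiating $\lambda^{\alpha+1}f'(\lambda)$ is what resolves this, since it cancels the $K$-contribution and leaves a product of a positive power of $\lambda$ with an affine, strictly decreasing function of $\lambda^2$; a single sign condition at $\lambda=1$ then propagates to all $\lambda>1$. With $f'<0$ on $(1,\infty)$ established, all four conditions defining $\mathcal B_\omega$ hold and the lemma is proved.
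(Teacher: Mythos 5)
Your proposal is correct and follows essentially the same route as the paper: you verify the four defining conditions of $\mathcal B_{\omega}$ via the scaling identities \eqref{Qlambda} and \eqref{El1}, the relation $H\bigl( (\vec \phi_{\omega})^{\lambda}, \, (\vec \psi_{\omega})_{\lambda} \bigr)=\lambda \p_{\lambda} E\bigl( (\vec \phi_{\omega})^{\lambda}, \, (\vec \psi_{\omega})_{\lambda} \bigr)$, and the explicit formula for $P_{\omega}\bigl( (\vec \phi_{\omega})^{\lambda} \bigr)$, exactly as the paper does. If anything, your treatment of the step $f'(\lambda)<0$ on $(1,\infty)$ via $g(\lambda)=\lambda^{\alpha+1}f'(\lambda)$ is more careful than the paper's, which asserts the energy decrease and $H<0$ directly from \eqref{El1} and Lemma \ref{blowup2} without spelling out why the local conditions $f'(1)=0$ and $f''(1)\le 0$ propagate to the whole half-line $\lambda>1$.
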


\begin{proof}
By \eqref{El4} and Lemma \ref{blowup2}, 
we have 
\[
\p_{\lambda} E \bigl( (\vec \phi_{\omega})^{\lambda}, 
\, (\vec \psi_{\omega})_{\lambda} \bigr) 
|_{\lambda=1} 
=H (\vec \phi_{\omega}, \vec \psi_{\omega}) =0, \quad  
\p_{\lambda}^2 E \bigl( (\vec \phi_{\omega})^{\lambda}, 
\, (\vec \psi_{\omega})_{\lambda} \bigr) 
|_{\lambda=1} \le 0.
\]
Thus, by \eqref{El1}, we see that 
$E \bigl( (\vec \phi_{\omega})^{\lambda}, \, (\vec \psi_{\omega})_{\lambda} \bigr) 
<E ( \vec \phi_{\omega}, \, \vec \psi_{\omega})$ 
for $\lambda \in (1, \infty)$. 

Moreover, by \eqref{def-H}, we have 
\[
H \bigl( (\vec \phi_{\omega})^{\lambda}, \, (\vec \psi_{\omega})_{\lambda} \bigr) 
=\lambda \p_{\lambda} 
E \bigl( (\vec \phi_{\omega})^{\lambda}, \, (\vec \psi_{\omega})_{\lambda} \bigr),
\]
and we see that 
$H \bigl( (\vec \phi_{\omega})^{\lambda}, \, (\vec \psi_{\omega})_{\lambda} \bigr)<0$ 
for $\lambda \in (1, \infty)$. 

\vspace{1mm}
Next, by \eqref{Qlambda}, we have 
$Q \bigl( (\vec \phi_{\omega})^{\lambda}, \, (\vec \psi_{\omega})_{\lambda} \bigr) 
=Q( \vec \phi_{\omega}, \, \vec \psi_{\omega})$ 
for $\lambda \in (0, \infty)$. 

\vspace{1mm}
Finally, since $P_{\omega} \bigl( \vec \phi_{\omega} \bigr) =0$ and 
\[
P_{\omega} \bigl( (\vec \phi_{\omega})^{\lambda} \bigr) 
=\alpha \lambda^{\alpha} M_{\omega} (\vec \phi_{\omega})
-(\alpha+2) \lambda^{\alpha+2} L(\vec \phi_{\omega}), 
\]
we see that 
$P_{\omega} \bigl( (\vec \phi_{\omega})^{\lambda} \bigr)<0$ 
for $\lambda \in (1, \infty)$. 

This completes the proof. 
\end{proof}

Finally, we give the proof of Theorem \ref{thm1}. 

\begin{proof}[Proof of Theorem \ref{thm1}]
Since $\vec \phi_{\omega}$ is radially symmetric and 
\[
\lim_{\lambda \to 1} 
\left\| \bigl( (\vec \phi_{\omega})^{\lambda}, \, (\vec \psi_{\omega})_{\lambda} \bigr)
-( \vec \phi_{\omega}, \, \vec \psi_{\omega} ) \right\|_X=0, 
\]
Theorem \ref{thm1} follows from 
Lemma \ref{blowup1} and Lemma \ref{blowup3}. 
\end{proof}

\section{Proof of Proposition \ref{key}} \label{sect5}

In this section, 
we give the proof of Proposition \ref{key}. 
We use a similar argument introduced by the author \cite{ohta18} 
for the nonlinear Schr\"odinger equation \eqref{nls-harmonic} 
with harmonic potential. 
First, we prove two simple lemmas. 

\begin{lemma} \label{VC4} 
If $(\vec u, \vec v) \in X$ satisfies 
$Q(\vec u, \vec v)=Q (\vec \phi_{\omega}, \vec \psi_{\omega})$ 
and 
$L (\vec u)=L (\vec \phi_{\omega})$, 
then 
$E (\vec \phi_{\omega}, \vec \psi_{\omega}) \le E (\vec u, \vec v)$. 
\end{lemma}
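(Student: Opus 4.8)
The plan is to pass from $E$ to the shifted action $S_\omega = E - \omega Q$ defined in \eqref{Somega} and to exploit that the charge is held fixed by hypothesis. Writing $E = S_\omega + \omega Q$ and using $Q(\vec u, \vec v) = Q(\vec\phi_\omega, \vec\psi_\omega)$, the $\omega Q$ contributions cancel in the difference, so that $E(\vec u, \vec v) - E(\vec\phi_\omega, \vec\psi_\omega) = S_\omega(\vec u, \vec v) - S_\omega(\vec\phi_\omega, \vec\psi_\omega)$; it therefore suffices to compare the two actions. Here the decomposition \eqref{Somega0} does the work: for the standing-wave pair the components of $\vec\psi_\omega$ are exactly $i\omega\phi_{1,\omega}$ and $2i\omega\phi_{2,\omega}$, so the two squared-norm terms vanish and $S_\omega(\vec\phi_\omega, \vec\psi_\omega) = J_\omega(\vec\phi_\omega)$, while for a general competitor those terms are nonnegative and give $S_\omega(\vec u, \vec v) \ge J_\omega(\vec u)$. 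Thus the statement reduces to the $\vec v$-free inequality $J_\omega(\vec u) \ge J_\omega(\vec\phi_\omega)$.

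Next I would feed in the constraint $L(\vec u) = L(\vec\phi_\omega) =: \ell$. Since $P_\omega(\vec\phi_\omega) = 0$ gives $\ell = \frac{\alpha}{\alpha+2}M_\omega(\vec\phi_\omega) > 0$, and since $J_\omega = M_\omega - L$, a direct substitution shows $J_\omega(\vec u) - J_\omega(\vec\phi_\omega) = M_\omega(\vec u) - M_\omega(\vec\phi_\omega)$. So everything collapses to the single mass inequality $M_\omega(\vec u) \ge M_\omega(\vec\phi_\omega)$. Before proving it I would note that $\vec u \ne \vec 0$ (because $L(\vec u) = \ell > 0$) and that $M_\omega(\vec u) > 0$, since $M_\omega$ is a positive-definite combination of $L^2$-norms under the standing hypothesis $\omega^2 < \min\{m_1^2, m_2^2/4\}$.

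The main obstacle is precisely establishing $M_\omega(\vec u) \ge M_\omega(\vec\phi_\omega)$ from the single scalar constraint $L(\vec u) = \ell$, since $M_\omega$ and $L$ control different quantities; the bridge is the charge-invariant scaling \eqref{scale1} combined with the mass comparison of Remark \ref{remark5}. I would choose $\lambda_0 > 0$ with $P_\omega(\vec u^{\lambda_0}) = 0$; from $P_\omega(\vec u^\lambda) = \alpha\lambda^\alpha M_\omega(\vec u) - (\alpha+2)\lambda^{\alpha+2}\ell$ this pins down $\lambda_0^2 = \alpha M_\omega(\vec u)/((\alpha+2)\ell)$. Since $\vec u^{\lambda_0} \ne \vec 0$ and $P_\omega(\vec u^{\lambda_0}) = 0$, Remark \ref{remark5} (a consequence of Lemma \ref{VC2}) yields $M_\omega(\vec\phi_\omega) \le M_\omega(\vec u^{\lambda_0}) = \lambda_0^\alpha M_\omega(\vec u)$. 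Combining this with $M_\omega(\vec\phi_\omega) = \frac{\alpha+2}{\alpha}\ell = M_\omega(\vec u)/\lambda_0^2$ forces $\lambda_0^{\alpha+2} \ge 1$, hence $\lambda_0 \ge 1$; inserting $\lambda_0^2 \ge 1$ back into the formula for $\lambda_0^2$ gives $M_\omega(\vec u) \ge \frac{\alpha+2}{\alpha}\ell = M_\omega(\vec\phi_\omega)$, which closes the argument. The only delicate point is to keep the direction of the inequality straight through the scaling exponents, as a sign slip there would reverse the conclusion.
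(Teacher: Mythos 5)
Your proposal is correct, and its first half is exactly the paper's: you pass to $S_{\omega}=E-\omega Q$, use the fixed charge and the decomposition \eqref{Somega0} to reduce everything to $J_{\omega}(\vec u)\ge J_{\omega}(\vec \phi_{\omega})$. Where you diverge is in how you establish that inequality. The paper does it in two lines: since Lemma \ref{VC3} says $P_{\omega}(\vec u)<0$ would force $L(\vec \phi_{\omega})<L(\vec u)$, the hypothesis $L(\vec u)=L(\vec \phi_{\omega})$ rules this out, so $P_{\omega}(\vec u)\ge 0$; then the identity $\alpha J_{\omega}=P_{\omega}+2L$ together with $P_{\omega}(\vec \phi_{\omega})=0$ and the equality of $L$ gives $\alpha J_{\omega}(\vec \phi_{\omega})=2L(\vec u)\le \alpha J_{\omega}(\vec u)$. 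You instead reduce to the mass comparison $M_{\omega}(\vec u)\ge M_{\omega}(\vec \phi_{\omega})$, rescale $\vec u$ onto the set $\{P_{\omega}=0\}$ by solving for $\lambda_0$ explicitly, and invoke Lemma \ref{VC2} through Remark \ref{remark5}. This is valid --- all your scaling exponents and inequality directions check out --- but note that it amounts to re-deriving inline the content of Lemma \ref{VC3}, whose own proof is precisely this ``rescale to $P_{\omega}=0$ and apply Lemma \ref{VC2}'' argument. What the paper's route buys is economy: the contrapositive of an already-proved lemma plus one algebraic identity, with no need to compute $\lambda_0$ or track $\lambda_0^{\alpha+2}\ge 1$. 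What your route makes visible is the mechanism hidden inside Lemma \ref{VC3}, namely that the charge-invariant scaling \eqref{scale1} is what transfers information from the constraint $L(\vec u)=L(\vec \phi_{\omega})$ to the masses; that is a fair trade pedagogically, but in the paper's economy it is redundant work.
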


\begin{proof}
Since $L (\vec u)=L (\vec \phi_{\omega})$, 
it follows from Lemma \ref{VC3} that 
$P_{\omega} (\vec u)\ge 0$. 
Moreover, since 
$\alpha J_{\omega} (\vec u)=P_{\omega} (\vec u)+2 L (\vec u)$ 
and 
$P_{\omega} (\vec \phi_{\omega})=0$, 
we have 
\begin{align*}
\alpha J_{\omega}(\vec \phi_{\omega})
&=P_{\omega} (\vec \phi_{\omega})+2 L (\vec \phi_{\omega})
=2 L (\vec \phi_{\omega}) \\
&=2 L (\vec u) 
\le P_{\omega} (\vec u)+2 L (\vec u)
=\alpha J_{\omega} (\vec u). 
\end{align*}

Thus, by \eqref{Somega} and \eqref{Somega0}, we have 
\begin{align*}
E (\vec \phi_{\omega}, \vec \psi_{\omega}) 
&=S_{\omega} (\vec \phi_{\omega}, \vec \psi_{\omega}) 
+\omega Q(\vec \phi_{\omega}, \vec \psi_{\omega}) 
=J_{\omega} (\vec \phi_{\omega}) 
+\omega Q(\vec \phi_{\omega}, \vec \psi_{\omega}) \\
&\le J_{\omega} (\vec u) +\omega Q(\vec u, \vec v)
\le S_{\omega} (\vec u, \vec v) +\omega Q(\vec u, \vec v)
=E(\vec u, \vec v). 
\end{align*}

This completes the proof. 
\end{proof}

\begin{lemma} \label{function-g}
Let $\beta>1$, and define 
\begin{equation} \label{def-g}
g(s)=s^{\beta}-1-\beta (s-1)-\frac{\beta (\beta-1)}{2} s^{\beta-1} (s-1)^2
\end{equation} 
for $s\in (0, \infty)$. 
Then, $g(s)>0$ for all $s \in (0,1)$. 
\end{lemma}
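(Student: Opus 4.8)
The plan is to recognize the first three terms of $g$ as the second-order Taylor remainder of the map $\varphi(s)=s^{\beta}$ about the base point $s=1$, and thereby reduce the whole statement to a single pointwise comparison of powers. First I would record the orienting fact that $g(1)=0$, so there is no boundary term to contend with at the right endpoint; the content lies entirely in the behavior on $(0,1)$.

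Next I would apply Taylor's theorem with Lagrange remainder to $\varphi(s)=s^{\beta}$, which is smooth on $(0,\infty)$: for each $s\in(0,1)$ there exists $\xi\in(s,1)$ (hence $\xi>0$) such that
\[
s^{\beta}=1+\beta(s-1)+\frac{\beta(\beta-1)}{2}\,\xi^{\beta-2}(s-1)^2.
\]
Substituting this into \eqref{def-g} makes the terms $s^{\beta}-1-\beta(s-1)$ collapse into the remainder, leaving
\[
g(s)=\frac{\beta(\beta-1)}{2}\,(s-1)^2\bigl(\xi^{\beta-2}-s^{\beta-1}\bigr).
\]
Since $\beta>1$ gives $\tfrac{\beta(\beta-1)}{2}>0$, and $(s-1)^2>0$ for $s\ne 1$, the sign of $g(s)$ is exactly the sign of $\xi^{\beta-2}-s^{\beta-1}$, and the entire claim is reduced to proving $\xi^{\beta-2}>s^{\beta-1}$ whenever $0<s<\xi<1$.

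Finally I would establish that pointwise inequality by distinguishing the sign of the exponent $\beta-2$. If $\beta\ge 2$, then $x\mapsto x^{\beta-2}$ is nondecreasing, so $\xi>s$ yields $\xi^{\beta-2}\ge s^{\beta-2}$, and because $0<s<1$ we have $s^{\beta-2}=s^{\beta-1}\cdot s^{-1}>s^{\beta-1}$, giving the strict inequality. If $1<\beta<2$, then $\beta-2<0$ together with $\xi<1$ forces $\xi^{\beta-2}>1$, while $0<s<1$ and $\beta-1>0$ give $s^{\beta-1}<1$, so again $\xi^{\beta-2}>1>s^{\beta-1}$. In either case $g(s)>0$ on $(0,1)$, as claimed.

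The only genuine subtlety---and hence the step I expect to be the main obstacle---is this last comparison $\xi^{\beta-2}>s^{\beta-1}$: it cannot be read off from $\xi>s$ alone, since the monotonicity of $x\mapsto x^{\beta-2}$ reverses as $\beta$ crosses $2$, which is precisely why the case split is needed. Everything else is bookkeeping. One could alternatively attempt to show $g'(s)<0$ on $(0,1)$ and integrate back from $g(1)=0$, but computing and signing $g'$ directly is messier than the Taylor-remainder identity, so I would favor the route above.
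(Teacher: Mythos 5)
Your proposal is correct and takes essentially the same route as the paper: the Taylor--Lagrange remainder identity for $s^{\beta}$ at $s=1$, reducing the claim to the comparison $\xi^{\beta-2}>s^{\beta-1}$ for $0<s<\xi<1$. The only difference is cosmetic: where you split cases at $\beta=2$, the paper closes with the uniform chain $s^{\beta-1}<\xi^{\beta-1}<\xi^{\beta-2}$ (the first inequality from $\beta-1>0$ and $s<\xi$, the second from $0<\xi<1$), so no case analysis is needed.
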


\begin{proof}
By the Taylor expansion of $s^{\beta}$ at $s=1$, 
for every $s\in (0,1)$ there exists $\xi(s) \in (s, 1)$ such that 
\[
s^{\beta}=1+\beta (s-1)+\frac{\beta (\beta-1)}{2} \xi(s)^{\beta-2} (s-1)^2, 
\]
which implies that 
\[
g(s)=\frac{\beta (\beta-1)}{2} (s-1)^2
\bigl\{ \xi(s)^{\beta-2}-s^{\beta-1} \bigr\}. 
\]

Since $\beta>1$ and $s<\xi(s)<1$, 
we have 
$s^{\beta-1}<\xi(s)^{\beta-1}<\xi(s)^{\beta-2}$. 

Thus, we have $g(s)>0$ for $s\in (0,1)$. 
\end{proof}

Finally, we give the proof of Proposition \ref{key}. 

\begin{proof}[Proof of Proposition \ref{key}]
Let $(\vec u, \vec v) \in X$ satisfy \eqref{HQP}. 
Then, by Lemma \ref{VC3}, we have 
\begin{equation} \label{HQL}
H (\vec u, \vec v) \le 0, \quad 
Q (\vec u, \vec v) = Q (\vec \phi_{\omega}, \vec \psi_{\omega}), \quad 
L(\vec \phi_{\omega})<L(\vec u). 
\end{equation} 

We divide the proof into two cases: 
$K (\vec \psi_{\omega})\le K (\vec v)$ 
and 
$K (\vec v) \le K (\vec \psi_{\omega})$. 
In both cases, the function 
$f: (0, \infty) \to \R$ defined by 
\begin{align}
f (\lambda)
&= \alpha E (\vec u^{\lambda}, \vec v_{\lambda})
-\lambda^{\alpha} H (\vec u, \vec v) 
\label{def-f} \\
&=\alpha \left( \lambda^{-\alpha}+\lambda^{\alpha} \right) K(\vec v)
-\bigl\{ \alpha \lambda^{\alpha+2}-(\alpha+2) \lambda^{\alpha} \bigr\} L (\vec u)
\nonumber 
\end{align}
plays an important role. 
Our goal is to prove that 
$\alpha E (\vec \phi_{\omega}, \vec \psi_{\omega}) \le f(1)$. 

\vspace{1mm} \noindent 
(Case 1) \hspace{1mm} 
First, we assume that $K (\vec \psi_{\omega})\le K (\vec v)$. 

In this case, since 
$H (\vec \phi_{\omega}, \vec \psi_{\omega})=0$ by \eqref{El4} 
and $L(\vec \phi_{\omega})<L(\vec u)$, 
we have 
\begin{align*}
\alpha E (\vec \phi_{\omega}, \vec \psi_{\omega})
&=\alpha E (\vec \phi_{\omega}, \vec \psi_{\omega})
-H (\vec \phi_{\omega}, \vec \psi_{\omega}) 
=2 \alpha K (\vec \psi_{\omega})+2 L (\vec \phi_{\omega}) \\
&\le 2 \alpha K (\vec v)+2 L (\vec u) =f(1). 
\end{align*}

\noindent 
(Case 2) \hspace{1mm} 
Next, we assume that $K (\vec v) \le K (\vec \psi_{\omega})$. 

Let $\lambda_0$ be a positive constant satisfying 
$\lambda_0^{\alpha+2} L(\vec u)=L(\vec \phi_{\omega})$. 
Then, since $L(\vec \phi_{\omega})<L(\vec u)$, 
we have $0<\lambda_0<1$, and 
\begin{equation} \label{LQ1} 
L\bigl( \vec u^{\lambda_0} \bigr)=L(\vec \phi_{\omega}), \quad 
Q( \vec u^{\lambda_0}, \vec v_{\lambda_0} ) 
= Q (\vec u, \vec v)
= Q (\vec \phi_{\omega}, \vec \psi_{\omega}). 
\end{equation} 
Thus, by Lemma \ref{VC4}, we have 
$E (\vec \phi_{\omega}, \vec \psi_{\omega}) 
\le E (\vec u^{\lambda_0}, \vec v_{\lambda_0})$. 

Moreover, since $H(\vec u, \vec v)\le 0$,  we have 
\[
\alpha E (\vec \phi_{\omega}, \vec \psi_{\omega}) 
\le \alpha E (\vec u^{\lambda_0}, \vec v_{\lambda_0}) 
\le f(\lambda_0). 
\]

Therefore, it suffices to prove that $f(\lambda_0)\le f(1)$. 
Note that $f(\lambda_0)\le f(1)$ is equivalent to 
\begin{equation} \label{flam1}
\alpha \left( \lambda_0^{-\alpha}+\lambda_0^{\alpha} -2 \right) K(\vec v)
\le 
\bigl\{ \alpha \lambda_0^{\alpha+2}-(\alpha+2) \lambda_0^{\alpha} +2 \bigr\} L (\vec u). 
\end{equation} 

By the assumption of Case 2, Lemma \ref{blowup2} and \eqref{LQ1}, 
we have 
\begin{equation} \label{flam0}
\alpha^2 K (\vec v)
\le \alpha^2 K (\vec \psi_{\omega})
\le (\alpha+2) L ( \vec \phi_{\omega})
=(\alpha+2) \lambda_0^{\alpha+2} L (\vec u). 
\end{equation} 
Thus, we see that \eqref{flam1} holds if 
\begin{equation} \label{flam2}
(\alpha+2) \lambda_0^{\alpha+2} (\lambda_0^{-\alpha}+\lambda_0^{\alpha} -2)
\le \alpha \bigl\{ \alpha \lambda_0^{\alpha+2}-(\alpha+2) \lambda_0^{\alpha} +2 \bigr\}. 
\end{equation} 

Note that \eqref{flam2} is equivalent to 
\[
\frac{1}{\alpha} \left( 1+\frac{2} {\alpha} \right) 
\lambda_0^2 (\lambda_0^{\alpha} -1)^2 
\le 
\lambda_0^{\alpha+2}-1
-\left( 1+\frac{2} {\alpha} \right) (\lambda_0^{\alpha}-1). 
\]
Thus, we see that \eqref{flam2} is equivalent to $g(\lambda_0^{\alpha}) \ge 0$, 
where $g$ is the function defined by \eqref{def-g}  
with $\beta=1+2/\alpha$. 

Therefore, by Lemma \ref{function-g}, 
we have $g(\lambda_0^{\alpha})>0$ and $f(\lambda_0)\le f(1)$. 

\vspace{1mm} 
This completes the proof of Proposition \ref{key}. 
\end{proof} 

\bigskip \noindent 
\textbf{Acknowledgements.} \hspace{0mm} 
This research started when the author visited Hokkaido University 
and gave a one-week intensive lecture 
in June 2024.  
The author would like to express his gratitude to Professor Satoshi Masaki 
for his warm hospitality.
The author also thanks Professors Hayato Miyazaki and Kota Uriya 
for valuable discussions. 
This work was supported by 
JSPS KAKENHI Grant Number JP24K06803.

\end{document}